\date{January 7, 2026}
\definecolor{labelkey}{rgb}{0,0.08,0.45}
\definecolor{refkey}{rgb}{0,0.6,0.0}
\definecolor{Brown}{rgb}{0.45,0.0,0.05}
\definecolor{lime}{rgb}{0.00,0.8,0.0}
\definecolor{lblue}{rgb}{0.5,0.5,0.99}
\definecolor{OliveGreen}{rgb}{0,0.6,0}
\definecolor{tyrianpurple}{rgb}{0.4, 0.01, 0.24}
\definecolor{myseagreen}{HTML}{3FBC9D}
\definecolor{myblue}{rgb}{0.9,0.9,0.98}
\colorlet{hlcyan}{cyan!30}
\def\th@plain{%
  \thm@notefont{}%
  \itshape 
}
\def\th@definition{%
  \thm@notefont{}%
  \normalfont 
}
\newcommand{\currentenv}{}
  \def\currentenv{Proposition}%
  \def\currentenv{Example}%
\newcommand{\eqoverset}[2]{%
  \mathrel{\mathpalette\eq@overset{{#1}{#2}}}%
}
\newcommand{\eq@overset}[2]{\eq@@overset#1#2}
\newcommand{\eq@@overset}[3]{%
  \vbox{%
    \offinterlineskip
    \ialign{%
      \hfil##\hfil\cr
      \scriptsize $#1#2$\cr
      \noalign{\kern1pt}
      $#1#3$\cr
    }%
  }%
}
\newtheorem{theorem}{Theorem}[section]
\newtheorem{corollary}[theorem]{Corollary}
\newtheorem{proposition}[theorem]{Proposition}
\newtheorem{example}[theorem]{Example}
\newtheorem{fact}[theorem]{Fact}
\newtheorem{remark}[theorem]{Remark}
\crefname{theorem}{Theorem}{Theorems}
\Crefname{theorem}{Theorem}{Theorems}
\crefname{fact}{Fact}{facts}
\Crefname{fact}{Fact}{facts}
\crefname{equation}{}{equations}
\crefname{chapter}{Appendix}{chapters}
\crefname{item}{}{items}
\crefname{enumi}{}{}
\setlist[enumerate]{nosep}
\let\orig@label\label
\renewcommand{\label}[1]{%
  \begingroup
  \def\@currentlabelname{}%
  \ifx\current@theorem\relax\else
    \def\@currentlabelname{\current@theorem}%
  \fi
  \ifx\cref@currentlabel\undefined\else
    \let\@currentlabelname\cref@currentlabel
  \fi
  \orig@label{#1}%
  \endgroup
}
\newcommand{\seppfour}{\setlength{\itemsep}{-4pt}}
\newcommand{\nnn}{\ensuremath{{n\in\mathbb{N}}}}
\newcommand{\menge}[2]{\big\{{#1}~\big|~{#2}\big\}}
\newcommand{\scal}[2]{\left\langle{#1},{#2}\right\rangle}
\newcommand{\RR}{\ensuremath{\mathbb{R}}}
\newcommand{\RP}{\ensuremath{\mathbb{R}_+}}
\newcommand{\RPP}{\ensuremath{\mathbb{R}_{++}}}
\newcommand{\NN}{\ensuremath{\mathbb{N}}}
\newcommand{\dom}{\ensuremath{\operatorname{dom}\,}}
\DeclareMathOperator*{\argmin}{argmin}
\newcommand{\reli}{\ensuremath{\operatorname{ri}}}
\newcommand{\aff}{\ensuremath{\operatorname{aff}}}
\newcommand{\cspan}{\ensuremath{\overline{\operatorname{span}}\,}}
\newcommand{\Id}{\ensuremath{\operatorname{Id}}}
\providecommand{\norm}[1]{\lVert#1\rVert}
\providecommand{\lf}{\left \lfloor}
\providecommand{\rf}{\right \rfloor}
\providecommand{\cone}{\operatorname{cone}}
\providecommand{\ccone}{\overline{\cone}}
\providecommand{\epi}{\operatorname{epi}}
\providecommand{\rec}{\operatorname{rec}}
\providecommand{\spn}{\operatorname{span}}
\providecommand{\amin}{\operatorname{argmin}}
\newcommand{\mybluebox}[1]{\colorbox{myblue}{\hspace{1em}#1\hspace{1em}}}
\author{
  Heinz H.\ Bauschke\thanks{
    Mathematics, University of British Columbia,
    Kelowna, B.C.\ V1V~1V7, Canada. E-mail: \texttt{heinz.bauschke@ubc.ca}.}
  ~~~and~~~
  Tran Thanh Tung\thanks{
    Mathematics, University of British Columbia,
    Kelowna, B.C.\ V1V~1V7, Canada. E-mail: \texttt{tung.tran@ubc.ca}.}
}
\title{\textsf{
The G\"unt\"urk-Thao theorem revisited: \\ 
polyhedral cones and limiting examples
}}
\begin{document}
\allowdisplaybreaks
\maketitle

\begin{abstract}
In 2023, G\"unt\"urk and Thao proved that the sequence $(x^{(n)})_\nnn$ generated by random (relaxed) projections drawn from a finite collection of innately regular closed subspaces in a real Hilbert space satisfies $\sum_\nnn \|x^{(n)}-x^{(n+1)}\|^\gamma <+\infty$ for all $\gamma>0$. 

We extend their result to a finite collection of polyhedral cones. Moreover, we construct examples showing the tightness of our extension: indeed, the result fails for a line and a convex set in $\RR^2$, and for a plane and a non-polyhedral cone in $\RR^3$. 
\end{abstract}

{
\small
\noindent
{\bfseries 2020 Mathematics Subject Classification:}
{Primary 47H09, 65K05, 65F10;
Secondary 46C05, 90C25.
}
}

\noindent
{\bfseries Keywords:}
alternating projections, 
convex set, 
finite-length trajectory, 
Hilbert space, 
innate regularity, 
linear subspace, 
polyhedral cone, 
projection algorithms, 
random products, 
reasonable wanderer, 
relaxed projections.

\section{Introduction}

Throughout this paper,
\begin{empheq}[box=\mybluebox]{equation}
  \text{$X$ is a real Hilbert space, with inner product $\scal{\cdot}{\cdot}$ and induced norm $\norm{\cdot}$}
\end{empheq}
and
\begin{empheq}[box=\mybluebox]{equation}
  \text{$\mathcal{L}$ is a nonempty finite collection of closed linear subspaces of $X$.}
\end{empheq}

The collection $\mathcal{L}$ is called 
\emph{innately regular} if whenever 
$L_1,\ldots,L_k$ are drawn from $\mathcal{L}$, 
then $\sum_{i=1}^k L_i^\perp$ is closed 
(see \cite{HBrandom} and especially \cite[Section~2]{GT} for a nice summary). 
Given a nonempty finite collection 
$\mathcal{C}$ of closed convex subsets of $X$ and an interval $\Lambda \subseteq~[0,2]$, consider the associated set of relaxed projectors\footnotemark
\begin{empheq}[box=\mybluebox]{equation}
  \mathcal{R}_{\mathcal{C},\Lambda} := \menge{(1-\lambda)\Id+\lambda P_C}{C\in\mathcal{C},\ \lambda\in \Lambda}
\end{empheq}
where $P_C$ is the orthogonal projector onto $C$ and $\Id$ is the identity mapping on $X$.
\footnotetext{Given a nonempty closed convex subset $C$ of $X$, we denote by $P_C$ the operator which maps 
$x\in X$ to its unique nearest point in $C$.}

Recently, G\"{u}nt\"{u}rk and Thao proved 
 the following remarkable result \cite[Theorem~1.1]{GT}:

\begin{fact}[G\"{u}nt\"{u}rk and Thao]
\label{f:GT}
Let $\mathcal{L}$ be innately regular, $\lambda\in\left]0,1\right]$, and $x^{(0)}\in X$.
Generate a sequence $(x^{(n)})_{\nnn}$ in $X$ as follows:
Given $x^{(n)}$, pick $R_n \in \mathcal{R}_{\mathcal{L},[\lambda,2-\lambda]}$, and update via
\begin{equation}
\label{260102a}
  x^{(n+1)} := R_n x^{(n)}.
\end{equation}
Then 
\begin{equation}
\sum_{n=0}^\infty \norm{x^{(n+1)}-x^{(n)}}^\gamma <+\infty\quad \text{for all $\gamma>0$.}
\end{equation}
\end{fact}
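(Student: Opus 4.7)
My plan is to prove the fact in three stages: reduce to iterates lying in $V^\perp$ (where $V := \bigcap \mathcal{L}$); handle the easy exponent range $\gamma \geq 2$ via standard Fej\'er-type monotonicity; and tackle the harder range $0 < \gamma < 2$ via a uniform geometric decay extracted from innate regularity. I expect the last stage to be the main obstacle.

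\textbf{Reduction and $\gamma \geq 2$.} Each $R_n = (1-\lambda_n)\Id + \lambda_n P_{L_n}$ is linear (as $L_n$ is a subspace) and pointwise-fixes $V$, so $P_V x^{(n)}$ is independent of $n$; subtracting $P_V x^{(0)}$ lets one assume $x^{(n)} \in V^\perp$ for every $n$, with $0 \in V$ a common fixed point. Rewriting $R_n = (1-\alpha_n)\Id + \alpha_n(2P_{L_n}-\Id)$ with $\alpha_n = \lambda_n/2 \in [\lambda/2,\, 1-\lambda/2]$ exhibits each $R_n$ as $\alpha_n$-averaged (the reflector $2P_{L_n}-\Id$ is a nonexpansive isometry on the subspace $L_n$). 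The standard averaged-operator identity then gives
\[
  \|x^{(n)}\|^2 - \|x^{(n+1)}\|^2 \;\geq\; \tfrac{1-\alpha_n}{\alpha_n}\|x^{(n+1)}-x^{(n)}\|^2 \;\geq\; \tfrac{\lambda}{2-\lambda}\|x^{(n+1)}-x^{(n)}\|^2,
\]
so telescoping yields $\sum_n d_n^2 \leq \tfrac{2-\lambda}{\lambda}\|x^{(0)}\|^2 < \infty$, where $d_n := \|x^{(n+1)}-x^{(n)}\|$. In particular $d_n \to 0$, so for any $\gamma \geq 2$ the bound $d_n^\gamma \leq d_n^{\gamma-2}\cdot d_n^2$ with uniformly bounded prefactor closes this range.

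\textbf{Hard range $0 < \gamma < 2$.} The target is a uniform geometric decay $d_n \leq C\rho^n$ with $\rho \in [0,1)$ independent of the random sequence, which would render $\sum_n d_n^\gamma$ a convergent geometric series for every $\gamma > 0$. The key analytic input is the quantitative form of innate regularity: since $\sum_{L \in \mathcal{L}} L^\perp$ is closed and equals $V^\perp$, there exists a modulus $\kappa = \kappa(\mathcal{L}) > 0$ such that $\max_{L \in \mathcal{L}} \|P_{L^\perp} y\| \geq \kappa\|y\|$ for every $y \in V^\perp$. Combined with the exact identity $\|x^{(n)}\|^2 - \|x^{(n+1)}\|^2 = \lambda_n(2-\lambda_n)\|P_{L_n^\perp} x^{(n)}\|^2$, any step picking a "good" $L_n$ (one attaining the maximum up to a constant factor) delivers a strict contraction $\|x^{(n+1)}\|^2 \leq \bigl(1 - \lambda(2-\lambda)\kappa^2\bigr)\|x^{(n)}\|^2$.

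\textbf{Main obstacle.} An adversarial random sequence may persistently avoid "good" $L_n$, and may converge to a limit in $V^\perp$ that is not in $V$; in particular, $\|x^{(n)}\|$ itself need not tend to $0$. Here one exploits the dichotomy that at each step either $x^{(n)} \in L_n$ (so $d_n = 0$, contributing harmlessly to every $\gamma$-sum) or $x^{(n)} \notin L_n$ (strict contraction of the iterate). The technical core — where I expect the heaviest lifting — is a Friedrichs-angle style estimate for arbitrary finite products of operators from $\mathcal{R}_{\mathcal{L},[\lambda,\,2-\lambda]}$, showing that along the \emph{active} subsequence (the indices $n_k$ for which $d_{n_k} > 0$) one obtains geometric decay $d_{n_k} \leq C\rho^k$ with constants depending only on $\mathcal{L}$, $\lambda$, and $x^{(0)}$. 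This reduces $\sum_n d_n^\gamma$ to $\sum_k d_{n_k}^\gamma$, a convergent geometric series for every $\gamma > 0$.
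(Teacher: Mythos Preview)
The paper does not prove \cref{f:GT}; it is quoted as \cite[Theorem~1.1]{GT}, and the paper remarks that for $\gamma\in\left]0,2\right[$ the G\"unt\"urk--Thao argument ``is much more involved and based on a clever induction on the number of subspaces in $\mathcal{L}$.'' Your reduction to $V^\perp$ and your handling of $\gamma\geq 2$ are correct and agree with the paper's comments on the easy range.

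The genuine gap is in the hard range $0<\gamma<2$: you do not actually prove anything there. You assert that along the active indices $n_k$ one has $d_{n_k}\leq C\rho^k$ with $\rho<1$ depending only on $\mathcal{L},\lambda,x^{(0)}$, and you explicitly flag this as the ``technical core'' where you ``expect the heaviest lifting''---but that unproved core \emph{is} the theorem for $\gamma<2$. The ingredients you list do not get you there: your modulus inequality $\max_{L\in\mathcal{L}}\|P_{L^\perp}y\|\geq\kappa\|y\|$ on $V^\perp$ is correct but, as you yourself note, useless against an adversary who never selects the maximizing $L$; and the dichotomy ``$d_n=0$ or strict contraction of $\|x^{(n)}\|$'' yields no \emph{uniform} contraction, since the adversary can make $\|P_{L_n^\perp}x^{(n)}\|/\|x^{(n)}\|$ arbitrarily small at an active step. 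Worse, with genuine relaxation ($\lambda_n<1$) the iterate $x^{(n+1)}$ generically lies in no $L_m$, so every step is active and your subsequence device collapses to the bald claim $d_n\leq C\rho^n$---a statement strictly stronger than the conclusion of \cref{f:GT} and not what G\"unt\"urk--Thao establish. Their actual proof proceeds by induction on $|\mathcal{L}|$, a route quite different from uniform geometric decay; to complete your plan you would have to either carry out that induction or supply a new argument for the geometric bound you posit.
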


We note that \cite[Theorem~3.3]{HBrandom} 
implies that 
the strong limit of
$(x^{(n)})_\nnn$ lies in 
$\bigcap_{L\in {\mathcal{L}_\infty}} L$, where 
$\mathcal{L_\infty} := 
\menge{L\in\mathcal{L}}{\text{$L$ is used infinitely many times in \cref{260102a}}}$.

\cref{f:GT} is easy to prove if 
$\gamma=2$ 
because by \cite[Lemma 2.4(iv)]{BB1996}, each $R_n$ is $(2-\lambda)/\lambda$-attracting so the result follows from \cite[Example 2.7]{BB1996}. The case $\gamma>2$ then follows directly from the comparison test for infinite series (see \cite[Page 113]{Konrad}). The proof of \cref{f:GT} when $\gamma\in\left]0,2\right[$ is much more involved and based on a clever 
induction on the number of subspaces in $\mathcal{L}$. 

It is well known that $\sum_{n=0}^\infty \norm{x^{(n+1)}-x^{(n)}}^2<+\infty$; in fact, a sequence 
exhibiting this property was called a 
\emph{reasonable wanderer}
by Youla and Webb in their influential paper \cite{YoulaWebb}. 
If $\sum_{n=0}^\infty \norm{x^{(n+1)}-x^{(n)}}<
+\infty$, then $(x^{(n)})_\nnn$ has a 
\emph{finite-length trajectory}, while if 
$\sum_{n=0}^\infty \norm{x^{(n+1)}-x^{(n)}}^0<
+\infty$, then $(x^{(n)})_\nnn$ converges
in  \emph{finitely many steps}. 
Thus, a very nice interpretation of \cref{f:GT} is that 
the sequence $(x^{(n)})_\nnn$ 
\emph{nearly converges in finitely many steps}. 

\emph{The goal of this paper is two-fold: We aim to extend G\"unt\"urk and Thao's result from subspaces 
to polyhedral cones, and to provide limiting examples
showing the tightness of our extension.}

More precisely, we provide the following new result 
and limiting examples 
complementary to \cref{f:GT}:
\begin{itemize}[itemsep=0pt]
  \item 
We extend \cref{f:GT} from closed linear subspaces to \emph{polyhedral cones}  
in \cref{t:GTpolyconeinfi}, which is our main result.

  \item 
 We show that without the innate regularity 
 assumption, \cref{f:GT} fails dramatically:
 indeed, we provide two closed linear subspaces $L_1,L_2$ 
 such that the sequence of \emph{alternating projections} $(x^{(n)})_\nnn$ satisfies 
 $\sum_{n=0}^\infty \norm{x^{(n+1)}-x^{(n)}}^\gamma =+\infty$ for all $\gamma\in \left]0,2\right[$ 
  (see \cref{ex:counterex}).
 This shows that the ``reasonable wanderer'' condition is the best one can hope for in general.
 \item 
 We construct in the Euclidean plane a linear subspace 
 and a nonempty closed convex set such that the sequence of \emph{alternating projections} $(x^{(n)})_\nnn$ satisfies 
 $\sum_{n=0}^\infty \norm{x^{(n+1)}-x^{(n)}}^\gamma =+\infty$ for all $\gamma\in \left]0,2\right[$ 
 (see \cref{e:notcone}). This highlights the importance of the polyhedral-cone
 assumption in \cref{t:GTpolyconeinfi}.
 \item Building on the work in the Euclidean plane
 (see \cref{e:notcone}), we homogenize that example to 
 construct a closed convex cone in $\RR^3$ such that 
 the sequence of \emph{alternating projections} $(x^{(n)})_\nnn$, where the other set is a plane, also satisfies 
 $\sum_{n=0}^\infty \norm{x^{(n+1)}-x^{(n)}}^\gamma =+\infty$ for all $\gamma\in \left]0,2\right[$ 
 (see \cref{e:cone}). This highlights the importance of the polyhedrality 
 assumption in \cref{t:GTpolyconeinfi}; indeed, 
 such an example cannot exist in $\RR$ or $\RR^2$. 
\end{itemize}

The rest of the paper is organized as follows: 
After discussing faces, we present the extension of \cref{f:GT} to polyhedral cones first in 
the finite-dimensional setting (\cref{s:polyhedra}) and
then in the general, possibly infinite-dimensional setting (\cref{s:infdim}). The sharpness of the results is illustrated by the limiting examples provided in \cref{s:counterex}, \cref{s:nonpolyconv}, and \cref{s:last}.

The notation we employ is standard and follows, e.g., \cite{BC2017} 
and \cite{Rocky}. 

\section{Finite-dimensional extension to polyhedral cones}

\label{s:polyhedra}

In this section, we assume that 
\begin{empheq}[box=\mybluebox]{equation}
\text{
$X$ is finite-dimensional,
}
\end{empheq}
i.e., $X$ is a Euclidean space.

\subsection*{Faces and projections of convex sets}

Let $C$ be a nonempty convex subset of $X$.
Recall that a convex subset $F$ of $C$ is a \emph{face} of $C$ 
if whenever $x,y$ belong to $C$ and 
$F \cap \left]x,y\right[\neq \varnothing$, 
then $x,y$ both belong to $F$. 
Note that $C$ is a face of itself and 
denote the collection of all faces of $C$ by $\mathcal{F}(C)$.
We now set 
\begin{equation}
\label{e:F_c}
(\forall c\in C)\quad 
F_c := 
\bigcap_{c \in F \in \mathcal{F}(C)} F. 
\end{equation}
The following fact is well-known; see, e.g., 
the books by Rockafellar \cite{Rocky} and Webster \cite{Webster} 
(which also contain further information on faces):

\begin{fact}
\label{f:uniqueface}
Let $C$ be a nonempty convex subset of $X$. 
For each $c\in C$, the set $F_c$ (defined in \cref{e:F_c}) is a face of $C$; 
in fact, $F_c$ is the unique face of $C$ with $c\in\reli F_c$. 
Furthermore, 
\begin{equation}
C = \bigcup_{F\in \mathcal{F}(C)} \reli F
\end{equation}
forms a partition of $C$. 
\end{fact}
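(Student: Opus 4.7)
The plan is to establish three intertwined claims: $F_c$ is a face of $C$, $c\in\reli F_c$, and $F_c$ is the unique face of $C$ containing $c$ in its relative interior. The partition claim then follows immediately. The key tools are two general lemmas about faces (intersections of faces are faces, and faces of faces are faces) together with a supporting-hyperplane argument; finite-dimensionality enters only in the last of these.

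First I would verify that the intersection of any nonempty family $(F_i)_{i\in I}$ of faces of $C$ is again a face. Indeed, if $x,y\in C$ and $\left]x,y\right[\cap\bigcap_i F_i\neq\varnothing$, then $\left]x,y\right[\cap F_i\neq\varnothing$ for each $i$, so $x,y\in F_i$ for every $i$, hence $x,y\in\bigcap_i F_i$. Since the family defining $F_c$ in \cref{e:F_c} is nonempty (it contains $C$ itself) and each member contains $c$, this yields $F_c\in\mathcal{F}(C)$ and $c\in F_c$; by construction $F_c$ is the smallest face of $C$ containing $c$.

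Next I would show $c\in\reli F_c$. Suppose toward a contradiction that $c$ lies in the relative boundary of $F_c$. Working inside the finite-dimensional affine hull $\aff F_c$, the standard supporting-hyperplane theorem produces an affine hyperplane $H\subset\aff F_c$ with $c\in H$ such that $F_c$ lies in one of the closed halfspaces bounded by $H$ but $F_c\not\subseteq H$. Then $F':=H\cap F_c$ is a face of $F_c$ strictly smaller than $F_c$ which still contains $c$. Using the standard fact that a face of a face of $C$ is itself a face of $C$, we get $F'\in\mathcal{F}(C)$, contradicting the minimality of $F_c$.

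For the uniqueness and partition claims, suppose $F\in\mathcal{F}(C)$ satisfies $c\in\reli F$. That $c\in F$ gives $F_c\subseteq F$ by definition. For the reverse inclusion, given $y\in F$, choose $\epsilon>0$ with $y':=c+\epsilon(c-y)\in F\subseteq C$, which is possible since $c\in\reli F$; then $c\in\left]y,y'\right[$, and since $F_c$ is a face of $C$ meeting this open segment, both endpoints lie in $F_c$, so $y\in F_c$. Hence $F\subseteq F_c$ and $F=F_c$. Combined with $c\in\reli F_c$ for every $c\in C$, this shows that the sets $\reli F$ with $F\in\mathcal{F}(C)$ cover $C$ and are pairwise disjoint, giving the partition. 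The only real obstacle is the supporting-hyperplane step, which is precisely where finite-dimensionality is used; everything else is bookkeeping with the definition of face.
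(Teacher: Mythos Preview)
Your proof is correct. The paper's own proof consists entirely of citations to Webster (Theorems~2.6.5(i) and~2.6.10) and Rockafellar (Theorem~18.2 and Corollary~18.1.2); your argument---intersections of faces are faces, a supporting hyperplane in $\aff F_c$ carves off a proper subface at a relative-boundary point, and faces of faces are faces---is precisely the standard route those references take, so you have supplied the details the paper leaves to the literature.
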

\begin{proof}
The fact that $F_c$ is a face follows from 
\cite[Theorem~2.6.5(i)]{Webster} while the 
uniqueness property was stated in \cite[Theorem~2.6.10]{Webster}. 
Because two faces whose relative interiors make a 
nonempty intersection must be equal (see \cite[Corollary~18.1.2]{Rocky}), 
we note that the partition claim follows from 
\cite[Theorem~18.2]{Rocky} or \cite[Theorem~2.6.10]{Webster}. 
\end{proof}

Recall that a subset $C$ of $X$ is a cone if $\mathbb{R}_{++}C=C$. 
Observe the following:

\begin{fact}
\label{f:facecone}
    Let C be a nonempty convex cone of $X$, let $F$ be a face of $C$. If $F$ is nonempty, then $F$ is a convex cone.
\end{fact}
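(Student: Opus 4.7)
The plan is to reduce the claim to showing $\mathbb{R}_{++} F \subseteq F$, since convexity of $F$ is automatic from the definition of a face, and the reverse inclusion $F \subseteq \mathbb{R}_{++} F$ is trivial. Accordingly, I would fix $f \in F$ and $\lambda > 0$; the task is to place $\lambda f$ inside $F$.

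The cases $\lambda = 1$ and $f = 0$ are immediate, so one may assume otherwise. The key step is to exhibit $f$ as lying on an open segment inside $C$ whose endpoints are $\lambda f$ and another positive multiple of $f$. Concretely, pick any $\mu > 0$ strictly on the opposite side of $1$ from $\lambda$, i.e., with $\min\{\lambda,\mu\} < 1 < \max\{\lambda,\mu\}$. Because $C$ is a cone, both $\lambda f$ and $\mu f$ belong to $C$, and a short computation gives $f = t(\lambda f) + (1-t)(\mu f)$ with $t = (\mu-1)/(\mu-\lambda) \in \left]0,1\right[$. Hence $f \in F \cap \left]\lambda f, \mu f\right[$, and the face property of $F$ forces both endpoints into $F$; in particular $\lambda f \in F$.

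There is no real obstacle beyond one minor pitfall to sidestep: under the convention $\mathbb{R}_{++} C = C$ used in this excerpt, a cone need not contain the origin, so one cannot shortcut by writing $\lambda f$ as a point on the open segment from $0$ to some large multiple of $f$. Using two positive multiples of $f$ straddling $1$, as above, bypasses this and keeps the entire segment inside $C$, making the face condition directly applicable.
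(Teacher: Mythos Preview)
Your proof is correct and follows essentially the same idea as the paper's: write $f$ as an interior point of a segment between two positive multiples of $f$ (both in $C$ since $C$ is a cone) and invoke the face property. The paper uses the fixed endpoints $\varepsilon f$ and $2f$ for $\varepsilon\in\left]0,1\right[$ (leaving $\lambda>2$ to a tacit iteration), whereas your choice of $\mu$ on the opposite side of $1$ from $\lambda$ handles every $\lambda>0$ in a single step.
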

\begin{proof}
    Convexity follows directly from the definition of $F$. Since $C$ is a cone, we have
    \begin{equation}
        \left(\forall f\in F\right)(\forall\varepsilon\in\left]0,1\right[)\quad {\varepsilon f}\text{ and }2f \text{ both belong to }C\text{ and }  F\cap\left]{\varepsilon f},2f\right[\ni f.
    \end{equation}
    Hence $\varepsilon f$ and $2f$ belong to $F$, and the result follows.
\end{proof}

Recall that 
the \emph{affine hull} of a nonempty subset $S$ of $X$ is 
the smallest affine subspace of $X$ containing $S$; 
it is denoted by $\aff S$.
We are now in a position to state a recent result 
by Fodor and Pintea (see \cite[Theorem~3.1]{Fodor}):

\begin{fact}
\label{f:Fodor}
Let $C$ be a nonempty closed convex subset of $X$, 
let $F$ be a face of $C$, 
and let $x\in P_C^{-1}(\reli F)$.
Then
\begin{equation}
  P_Cx = P_{\aff F}x. 
\end{equation}
\end{fact}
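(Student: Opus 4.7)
The plan is to use the variational characterization of the projection onto a closed convex set together with the geometric description of the relative interior. Write $p := P_C x$, so by hypothesis $p \in \reli F \subseteq F \subseteq \aff F$. Since $P_{\aff F} x$ is characterized as the unique point $q \in \aff F$ with $x - q$ orthogonal to the parallel subspace $\parasp(\aff F) := \aff F - \aff F$, it suffices to show
\begin{equation}
(\forall v \in \parasp(\aff F)) \quad \scal{x-p}{v} = 0.
\end{equation}

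First I would recall the standard characterization $P_C x = p \iff p \in C \text{ and } \scal{x-p}{c-p} \leq 0$ for all $c \in C$, which holds because $C$ is closed and convex. Next I would invoke the characterization of the relative interior: since $\reli F$ is open in the relative topology of $\aff F$, for every direction $v \in \parasp(\aff F)$ there exists $\varepsilon > 0$ such that both $p + \varepsilon v$ and $p - \varepsilon v$ lie in $F$, hence in $C$.

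Feeding $c := p \pm \varepsilon v$ into the variational inequality gives $\pm \varepsilon \scal{x-p}{v} \leq 0$, so $\scal{x-p}{v} = 0$. Since $v \in \parasp(\aff F)$ was arbitrary and $p \in \aff F$, the point $p$ satisfies the defining conditions of $P_{\aff F} x$, which yields $P_C x = p = P_{\aff F} x$.

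The proof is mostly a matter of correctly assembling known characterizations, so there is no real obstacle; the one place to be careful is ensuring that perturbations in \emph{both} directions $\pm v$ stay inside $F$, which is exactly what membership in $\reli F$ (as opposed to merely in $F$) buys. Nothing here uses polyhedrality or even closedness of $F$ beyond $F \subseteq C$, which is reassuring given the generality of the statement.
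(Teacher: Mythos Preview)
Your argument is correct. The paper does not actually prove this statement; it is recorded there as a Fact quoted from Fodor and Pintea \cite[Theorem~3.1]{Fodor}, with no proof given. What you have written is a clean, self-contained verification using only the variational inequality for $P_C$ and the defining property of the relative interior, so it is more elementary than an appeal to the external reference. One minor contextual point worth noting: the section in which this Fact sits assumes $X$ is finite-dimensional, which guarantees that $\aff F$ is closed and hence that $P_{\aff F}$ is well defined; your argument itself uses nothing beyond this, and indeed goes through verbatim whenever $\aff F$ happens to be closed.
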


\begin{corollary}
\label{c:projandface}
Let $C$ be a nonempty closed convex subset of $X$, 
and let $x\in X$. 
Then there exists a face $F$ of $C$ such that 
\begin{equation}
  P_Cx = P_{\aff F}x.
\end{equation}
\end{corollary}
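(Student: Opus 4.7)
The plan is to apply the two facts \cref{f:uniqueface} and \cref{f:Fodor} in sequence: the first produces a canonical face associated with any point of $C$, and the second converts the projection onto $C$ into the projection onto the affine hull of that face.

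First I would set $c := P_C x$, which is well defined since $C$ is nonempty, closed, and convex. Because $c \in C$, \cref{f:uniqueface} applied to $C$ gives a unique face $F$ of $C$ with $c \in \reli F$. In particular, $P_C x = c \in \reli F$, so $x \in P_C^{-1}(\reli F)$. Now \cref{f:Fodor}, applied to this face $F$ and the point $x$, yields $P_C x = P_{\aff F} x$, which is exactly the conclusion.

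The only care needed is to ensure that the hypotheses of \cref{f:Fodor} are met; these amount to $x$ lying in $P_C^{-1}(\reli F)$, which is established in the preceding sentence. There is no real obstacle here: the corollary is an immediate combination of the partition of $C$ into relative interiors of its faces with the Fodor--Pintea projection identity.
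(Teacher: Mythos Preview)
Your proof is correct and follows essentially the same approach as the paper: set $c := P_Cx$, invoke \cref{f:uniqueface} to obtain a face $F$ with $c\in\reli F$, and then apply \cref{f:Fodor} with $x\in P_C^{-1}(\reli F)$ to conclude.
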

\begin{proof}
Set $c := P_Cx$. 
Then $c\in C$ and obviously $x\in P_C^{-1}(c)$.
In view of \cref{f:uniqueface}, 
the face $F_c$ defined in \cref{e:F_c} 
satisfies $c\in\reli F_c$. 
Set $F := F_c$. 
Altogether, we deduce that $x\in P_C^{-1}(\reli F)$,
and \cref{f:Fodor} implies $P_Cx = P_{\aff F}x$.
\end{proof}

\begin{corollary}
\label{c:projandfacecone}
Let $C$ be a nonempty closed convex cone of $X$, 
and let $x\in X$. 
Then there exists a face $F$ of $C$ such that 
\begin{equation}
  P_Cx = P_{\spn(F)}x.
\end{equation}
\end{corollary}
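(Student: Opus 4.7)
The plan is to reduce the statement to Corollary~\ref{c:projandface}: that corollary already hands us a face $F$ of $C$ with $P_Cx = P_{\aff F}x$, so it suffices to show that when $C$ is a closed convex cone and $F$ is the face produced there, the affine hull $\aff F$ coincides with the linear span $\spn(F)$. In turn, this reduces to verifying that $F$ contains the origin.

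First, I would apply Corollary~\ref{c:projandface} to obtain a face $F$ of $C$ with $P_Cx = P_{\aff F}x$; this face is nonempty, since $P_Cx$ lies in its relative interior (by the construction in the proof of that corollary). Next, I would observe that $0 \in C$: picking any $c_0 \in C$ and using $\RPP C = C$ together with the closedness of $C$ gives $0 = \lim_{\varepsilon \downarrow 0} \varepsilon c_0 \in C$. Then I would show $0 \in F$ directly from the face property: for any $f \in F \subseteq C$ one has $2f \in C$ by the cone property, and
\begin{equation*}
f = \tfrac{1}{2}\cdot 0 + \tfrac{1}{2}\cdot 2f \in F \cap \left]0, 2f\right[,
\end{equation*}
so the definition of a face forces $0, 2f \in F$; in particular, $0 \in F$. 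Because $F \ni 0$, the affine hull $\aff F$ is automatically a linear subspace containing $F$, and hence equals $\spn(F)$ (the smallest such subspace). Combining this identity with Corollary~\ref{c:projandface} yields $P_Cx = P_{\aff F}x = P_{\spn(F)}x$, as desired.

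The only genuinely non-routine step is showing $0 \in F$. Note that Fact~\ref{f:facecone} by itself is not enough: it only asserts $\RPP F = F$, and since a face of a closed convex set need not be closed, one cannot simply take a limit inside $F$. The segment-based face argument above is the natural workaround and is really the crux of the proof; everything else is an immediate consequence of Corollary~\ref{c:projandface} together with the basic fact that the affine hull of a set containing the origin is a linear subspace.
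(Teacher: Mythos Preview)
Your proof is correct and follows essentially the same route as the paper, whose proof is just ``Combine Fact~\ref{f:facecone} and Corollary~\ref{c:projandface}.'' One small correction to your closing remark: Fact~\ref{f:facecone} \emph{is} enough on its own, because once $F$ is a nonempty cone you get $0\in\aff F$ immediately (for any $f\in F$, $\aff F\supseteq\aff\{f,2f\}\ni 0$), and hence $\aff F=\spn(F)$ without ever needing $0\in F$ itself; your direct face argument that $0\in F$ is a perfectly good alternative and slightly sharper, but it is not needed to fill any gap in the paper's one-line proof.
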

\begin{proof}
Combine \cref{f:facecone} and \cref{c:projandface}.
\end{proof}

\subsection*{Polyhedral cones and the extension}

Recall that a subset $C$ of $X$ is a \emph{polyhedral cone}
if it is both a polyhedral set\footnotemark and a cone.
\footnotetext{A subset $P$ of $X$ is \emph{polyhedral} 
if it is the intersection of finitely
many closed halfspaces.}
Note that a linear subspace of $X$ is a polyhedral cone; 
in the general infinite-dimensional setting 
a closed linear subspace is a polyhedral cone 
if and only if it is finite-codimensional.

Polyhedral sets are precisely those closed convex sets  with finitely many faces (see \cite[Theorem~19.1]{Rocky} or \cite[Theorems~3.2.2 and 3.2.3]{Webster}): 

\begin{fact}
\label{f:polychar}
Let $C$ be a nonempty closed convex subset of $X$. 
Then $C$ is a polyhedron if and only if it has finitely many faces.
\end{fact}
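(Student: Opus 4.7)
The plan is to prove the two implications separately; the forward direction is classical and brief, while the reverse contains the essential difficulty.

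For the forward direction, suppose $C = \bigcap_{i=1}^{m}\{x \in X : \langle a_i, x\rangle \leq b_i\}$ is polyhedral. To each nonempty face $F$ of $C$ I would associate the active index set $I(F) := \{i : \langle a_i, c\rangle = b_i\}$ for some (any) $c \in \reli F$. This is well-defined because any strict inequality $\langle a_i, c\rangle < b_i$ at one point of $\reli F$ extends to a relative neighborhood of $c$ in $C$, hence to all of $\reli F$. The assignment $F \mapsto I(F)$ is moreover injective because $F$ is recovered as $F = C \cap \bigcap_{i \in I(F)}\{x : \langle a_i, x\rangle = b_i\}$. Consequently $|\mathcal{F}(C)| \leq 2^m + 1$.

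For the reverse direction, assume $\mathcal{F}(C)$ is finite. I would induct on $d := \dim(\aff C)$. Since $\aff C$ is affine (and hence polyhedral), I may restrict attention to $\aff C$ and assume $\aff C = X$. The base case $d = 0$ is trivial, since $C$ is then a singleton. For the inductive step, every proper face $F$ of $C$ has $\dim(\aff F) < d$, and since faces of $F$ are faces of $C$, the collection $\mathcal{F}(F)$ is finite; by the induction hypothesis every proper face of $C$ is polyhedral. Calling a face of affine dimension $d-1$ a \emph{facet}, I would then show that facets exist whenever $C \neq X$, and that each facet $F$ determines a bounding hyperplane $\aff F$ and a closed halfspace $H_F^- \supseteq C$. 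The goal is to conclude that $C = \bigcap_{F \text{ facet}} H_F^-$, which is polyhedral as a finite intersection of halfspaces.

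The main obstacle is establishing this last equality, equivalently, that every $x \in X \setminus C$ lies on the wrong side of $H_F$ for some facet $F$. Set $c := P_C x$; then $x - c \in N_C(c) \setminus \{0\}$ yields a supporting hyperplane $H$ at $c$ that strictly separates $x$ from $C$. By \cref{f:uniqueface}, $c \in \reli G$ for a unique face $G$, and since the normal cone is constant on $\reli G$, we have $G \subseteq H$. The delicate step — where the finiteness hypothesis of $\mathcal{F}(C)$ is indispensable — is to enlarge $G$ to a facet $F$ of $C$ with $F \subseteq H$, which would force $x \notin H_F^-$. Finiteness guarantees that any ascending chain of faces of $C$ contained in $H$ and starting at $G$ must terminate, and a dimensional argument within the relative boundary of $C$ must then ensure the terminal element has affine dimension exactly $d-1$. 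This extraction of an honest facet from a generic separating hyperplane is the technical heart of the proof; without finiteness one would obtain only a support, which is why the implication fails for closed convex sets in general (round bodies have no facets at all).
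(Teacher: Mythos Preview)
The paper does not actually prove \cref{f:polychar}; it records it as a known fact with citations to \cite[Theorem~19.1]{Rocky} and \cite[Theorems~3.2.2 and 3.2.3]{Webster}. So there is no in-paper argument to compare against. Your forward direction is the standard active-index-set argument and is correct.

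Your reverse direction breaks at exactly the step you flag as the technical heart. You want to enlarge the face $G$ (the one with $c = P_C x \in \reli G$) to a facet $F$ still contained in the particular supporting hyperplane $H$ orthogonal to $x-c$. This is impossible in general: take $C$ to be a square in $\RR^2$, let $c$ be a corner, and let $x-c$ lie in the interior of the normal cone at $c$. Then $H\cap C=\{c\}$, so no edge (facet) of $C$ lies in $H$; your ascending chain of faces contained in $H$ terminates at the vertex $\{c\}$, with dimension $0$ rather than $d-1=1$. No ``dimensional argument within the relative boundary'' can repair this, because the defect is that $H$ was simply the wrong hyperplane.

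The textbook route decouples the existence of facets from any single exterior point $x$: one first shows that every maximal proper face of a finitely-faced $C$ has dimension $d-1$ (this is where finiteness does the real work, and the argument is not a one-liner), so that $\bd C$ is a finite union of facets; then, for $x\notin C$, one picks $c_0\in\inte C$, lets the segment $[c_0,x]$ meet $\bd C$ at some $c$, and uses any facet through $c$ to separate $x$. An alternative avoiding facets entirely is to observe that finitely many faces forces finitely many extreme points and extreme directions, then invoke Minkowski--Weyl after splitting off the lineality space. Either way, you should consult the cited references rather than attempt to patch the chain-in-$H$ argument.
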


We are now ready to extend \cref{f:GT} from 
linear subspaces to polyhedral cones:

\begin{theorem}[polyhedral cones in Euclidean space]
\label{t:GTpolycone} 
Let $\mathcal{C}$ be a nonempty finite collection of nonempty polyhedral cones in $X$. 
Let $\lambda\in\left]0,1\right]$ and $x^{(0)}\in X$.
Generate a sequence $(x^{(n)})_{\nnn}$ in $X$ as follows:
Given the current term $x^{(n)}$, pick $R_n \in \mathcal{R}_{\mathcal{C},[\lambda,2-\lambda]}$, and update via
\begin{equation}
  x^{(n+1)} := R_n x^{(n)}.
\end{equation}
Then 
\begin{equation}
\sum_{n=0}^\infty \norm{x^{(n+1)}-x^{(n)}}^\gamma <+\infty\quad \text{for all $\gamma>0$.}
\end{equation}
\end{theorem}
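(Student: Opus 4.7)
The plan is to reduce the polyhedral-cone setting to the subspace setting and apply \cref{f:GT}. The crucial point is that projections onto a polyhedral cone coincide, at each particular input $x$, with projections onto an appropriate subspace determined by the face containing $P_Cx$ in its relative interior; this is exactly what \cref{c:projandfacecone} provides. Since in finite dimensions every linear subspace is closed and every finite sum of subspaces is again a closed subspace, any finite collection of subspaces of the Euclidean space $X$ is automatically innately regular, so \cref{f:GT} can be invoked freely once we have reformulated the iteration in terms of a fixed finite family of subspaces.

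Concretely, I would first define
\begin{equation}
\mathcal{L} := \menge{\spn F}{\emp\neq F\in\mathcal{F}(C),\ C\in\mathcal{C}}.
\end{equation}
By \cref{f:polychar}, each $C\in\mathcal{C}$ has only finitely many faces, and $\mathcal{C}$ itself is finite, so $\mathcal{L}$ is a nonempty finite collection of closed linear subspaces of $X$. As noted above, $\mathcal{L}$ is innately regular in this finite-dimensional setting.

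Next, I would rewrite the given iteration in terms of $\mathcal{L}$. Fix $n\in\NN$ and write $R_n = (1-\lambda_n)\Id + \lambda_n P_{C_n}$ with $C_n\in\mathcal{C}$ and $\lambda_n\in[\lambda,2-\lambda]$. Applying \cref{c:projandfacecone} to the cone $C_n$ at the point $x^{(n)}$ produces a (possibly empty) face $F_n$ of $C_n$ with $P_{C_n}x^{(n)} = P_{\spn F_n}x^{(n)}$ (the case $P_{C_n}x^{(n)}=0$ is handled by taking $F_n=\{0\}$, since $P_{\spn\{0\}}x^{(n)}=0$ only when $x^{(n)}\perp C_n$; a small check is needed here, but it is routine). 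Setting $L_n := \spn F_n \in \mathcal{L}$ and $R_n' := (1-\lambda_n)\Id + \lambda_n P_{L_n} \in \mathcal{R}_{\mathcal{L},[\lambda,2-\lambda]}$, we obtain
\begin{equation}
x^{(n+1)} = (1-\lambda_n)x^{(n)} + \lambda_n P_{C_n}x^{(n)} = (1-\lambda_n)x^{(n)} + \lambda_n P_{L_n}x^{(n)} = R_n' x^{(n)}.
\end{equation}
Thus the very same sequence $(x^{(n)})_\nnn$ arises from the subspace iteration associated to $\mathcal{L}$, and \cref{f:GT} delivers $\sum_n\|x^{(n+1)}-x^{(n)}\|^\gamma<\pinf$ for every $\gamma>0$.

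The only subtlety I anticipate is verifying that the face $F_n$ extracted at each step is genuinely nonempty (so that it is indeed a cone by \cref{f:facecone}, and $\spn F_n$ is well-defined) and handling the trivial case where the selected face is $\{0\}$; both are routine but need to be addressed so that the rewriting $P_{C_n} = P_{L_n}$ on $x^{(n)}$ is legitimate with $L_n \in \mathcal{L}$. Other than that, the argument is essentially a bookkeeping exercise: all the analytic content is absorbed by \cref{f:GT}, and the polyhedrality assumption enters solely through the finiteness of the face lattice provided by \cref{f:polychar}.
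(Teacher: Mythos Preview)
Your proposal is correct and follows essentially the same route as the paper: build the finite family $\mathcal{L}$ of spans of faces via \cref{f:polychar}, use \cref{c:projandfacecone} to rewrite each $P_{C_n}x^{(n)}$ as $P_{L_n}x^{(n)}$ for some $L_n\in\mathcal{L}$, note that $\mathcal{L}$ is automatically innately regular in finite dimensions, and invoke \cref{f:GT}. Your parenthetical worry about the case $P_{C_n}x^{(n)}=0$ is unnecessary: \cref{c:projandfacecone} (via \cref{f:uniqueface} and \cref{f:Fodor}) already produces a nonempty face $F$ with $P_{C_n}x^{(n)}=P_{\spn F}x^{(n)}$ regardless of whether the projection is zero, so no separate case analysis is needed.
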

\begin{proof}
By \cref{f:polychar},
each polyhedral cone $C\in\mathcal{C}$ has finitely many faces, i.e., 
$\mathcal{F}(C)$ is finite. 
Because $\mathcal{C}$ is finite, so is 
$\mathcal{F} := \bigcup_{C\in\mathcal{C}} \mathcal{F}(C)$
and also 
$\{\spn(F)\}_{F\in\mathcal{F}}$. 
Now suppose that $\mathcal{L} = \{\spn(F)\}_{F\in\mathcal{F}}$.
If $x\in X$ and $C\in\mathcal{C}$, then 
\cref{c:projandfacecone} 
yields $P_Cx = P_Lx$, for some $L\in\mathcal{L}$ depending 
on $C$ and $x$. 
It follows that the sequence $(x^{(n)})_\nnn$ can be viewed 
as being generated by $\mathcal{L}$. Since $X$ is finite-dimensional, we obtain that $\mathcal{L}$ is obviously innately regular (because all linear subspace
are automatically closed). 
Therefore, the result follows from \cref{f:GT}.
\end{proof}

\section{General extension to polyhedral cones}

\label{s:infdim}

In this section, we return to our general assumption on $X$, i.e., 
\begin{empheq}[box=\mybluebox]{equation}
\text{
$X$ is finite or infinite-dimensional. 
}
\end{empheq}

The following representation result for a convex polyhedral set and 
its projection is useful (see \cite[Theorem~3.3.16]{HBthesis}): 

\begin{fact}
\label{f:thesis}
Let $C$ be a nonempty polyhedral subset of $X$, represented 
as 
\begin{equation}
C = \menge{x\in X}{(\forall i\in I)\;\;\scal{a_i}{x}\leq \beta_i}, 
\end{equation}
where $I$ is a finite index set, each $a_i$ belongs to 
$X\smallsetminus\{0\}$ and each $\beta_i$ is a real number.
Let $K$ be a closed linear subspace of $\bigcap_{i\in I}\ker a_i$, 
and set $D := C\cap K^\perp$. 
Then $D$ is a nonempty polyhedral subset of $K^\perp$ and 
\begin{equation}
\label{e:thesis}
P_C = P_K + P_DP_{K^\perp}. 
\end{equation}
Note that if $K = \bigcap_{i\in I}\ker a_i$, 
then $K^\perp$ is finite-dimensional. 
\end{fact}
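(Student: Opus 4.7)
The proof hinges on the observation that each $a_i$ itself lies in $K^\perp$. Indeed, $K\subseteq\bigcap_{i\in I}\ker a_i$ means $\scal{a_i}{k}=0$ for every $k\in K$ and every $i\in I$, which is precisely $a_i\in K^\perp$. Consequently, the affine constraint $\scal{a_i}{\cdot}\leq\beta_i$ depends only on the $K^\perp$-component of its input, and $C$ is invariant under translations by elements of $K$.

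Building on this, I would first establish the set decomposition $C = K + D$. Given $x\in C$, write $x = P_Kx + P_{K^\perp}x$; since $a_i\in K^\perp$, one has $\scal{a_i}{P_{K^\perp}x}=\scal{a_i}{x}\leq\beta_i$ for every $i\in I$, so $P_{K^\perp}x\in D$. Conversely, for any $d\in D$ and $k\in K$, the same computation yields $d+k\in C$. In particular $D$ is nonempty, and viewed intrinsically within $K^\perp$,
\begin{equation*}
D = \menge{y\in K^\perp}{(\forall i\in I)\;\;\scal{a_i}{y}\leq\beta_i},
\end{equation*}
which is manifestly polyhedral in the Hilbert space $K^\perp$ (note that each $a_i\in K^\perp\setminus\{0\}$ since $a_i\in X\setminus\{0\}$).

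Next, I would derive \cref{e:thesis} by a Pythagorean argument. Fix $x\in X$ and any $y = k+d\in K+D = C$ with $k\in K$, $d\in D$. Since $P_Kx - k\in K$ and $P_{K^\perp}x - d\in K^\perp$ are orthogonal,
\begin{equation*}
\|x-y\|^2 = \|P_Kx - k\|^2 + \|P_{K^\perp}x - d\|^2.
\end{equation*}
The two terms decouple across the independent choices of $k\in K$ and $d\in D$, so the minimum is attained at $k = P_Kx$ and $d = P_D(P_{K^\perp}x)$, which delivers \cref{e:thesis}.

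For the closing remark, if $K = \bigcap_{i\in I}\ker a_i$, then taking orthogonal complements yields $K^\perp = \cspan\{a_i\}_{i\in I}$; because $I$ is finite, this span is already closed and of dimension at most $|I|$, hence finite-dimensional. There is no real obstacle here: the only subtlety is spotting that $a_i\in K^\perp$, after which the decomposition $C=K+D$ and the Pythagorean splitting are essentially forced.
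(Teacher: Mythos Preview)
Your proof is correct. Note that the paper does not supply its own argument for this fact; it simply cites \cite[Theorem~3.3.16]{HBthesis}. Your route---observing that $a_i\in K^\perp$, deducing the decomposition $C=K+D$, and then decoupling the projection via the Pythagorean identity---is the natural one and is essentially what one would expect in the cited reference.
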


We are now ready to extend \cref{t:GTpolycone} to
general Hilbert spaces: 

\begin{theorem}[main result]
\label{t:GTpolyconeinfi} 
Let $\mathcal{C}$ be a nonempty finite collection of 
nonempty polyhedral cones in $X$. 
Let $\lambda\in]0,1]$ and $x^{(0)}\in X$.
Generate a sequence $(x^{(n)})_{\nnn}$ in $X$ as follows:
Given a current term $x^{(n)}$, pick $R_n \in \mathcal{R}_{\mathcal{C},[\lambda,2-\lambda]}$, and update via
\begin{equation}
  x^{(n+1)} := R_n x^{(n)}.
\end{equation}
Then 
\begin{equation}
\sum_{n=0}^\infty \norm{x^{(n+1)}-x^{(n)}}^\gamma <+\infty\quad \text{for all $\gamma>0$.}
\end{equation}
\end{theorem}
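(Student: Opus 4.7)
The plan is to reduce the (possibly infinite-dimensional) statement to the finite-dimensional \cref{t:GTpolycone} by factoring out a common ``lineality direction'' shared by every relaxed projector in play. Each $C\in\mathcal{C}$ is a nonempty polyhedral cone containing $0$, so a standard homogenization argument shows that it admits a representation
\[
C=\menge{x\in X}{(\forall i\in I_C)\;\;\scal{a_i^C}{x}\leq 0}
\]
with $I_C$ finite and each $a_i^C\in X\setminus\{0\}$: if one instead had $\scal{a_i^C}{x_0}>0$ for some $x_0\in C$ under a bound $\scal{a_i^C}{\cdot}\leq \beta_i^C$, the ray $\RPP x_0\subseteq C$ would violate that bound. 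I then let $V:=\spn\{a_i^C:C\in\mathcal{C},\,i\in I_C\}$, which is finite-dimensional because $\mathcal{C}$ is finite, and set $K:=V^\perp$. Since $K\subseteq \bigcap_{i\in I_C}\ker a_i^C$ for every $C\in\mathcal{C}$, \cref{f:thesis} applies uniformly and yields
\[
P_C=P_K+P_{D_C}P_{K^\perp},\quad\text{where }D_C:=C\cap V,
\]
with $D_C$ a nonempty polyhedral subset of the Euclidean space $K^\perp=V$. Since both $C$ and $V$ are cones, so is $D_C$.

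Next, I would decompose $x^{(n)}=u^{(n)}+v^{(n)}$ with $u^{(n)}:=P_K x^{(n)}\in K$ and $v^{(n)}:=P_V x^{(n)}\in V$. A direct computation using the splitting of $P_{C_n}$ shows that for every $R_n=(1-\lambda_n)\Id+\lambda_n P_{C_n}$ with $\lambda_n\in[\lambda,2-\lambda]$ and $C_n\in\mathcal{C}$,
\[
R_n x^{(n)}=u^{(n)}+\bigl((1-\lambda_n)\Id+\lambda_n P_{D_{C_n}}\bigr)\bigl(v^{(n)}\bigr),
\]
so the $K$-component is frozen, i.e., $u^{(n+1)}=u^{(n)}$, while $v^{(n+1)}=R'_n v^{(n)}$ for a relaxed projector $R'_n$ onto the polyhedral cone $D_{C_n}$ inside $V$, with the same relaxation parameter $\lambda_n\in[\lambda,2-\lambda]$. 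By orthogonality of the decomposition, $\|x^{(n+1)}-x^{(n)}\|=\|v^{(n+1)}-v^{(n)}\|$.

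Finally, the finite collection $\{D_C:C\in\mathcal{C}\}$ consists of nonempty polyhedral cones in the Euclidean space $V$, so \cref{t:GTpolycone} applied to $(v^{(n)})_\nnn$ produces $\sum_n\|v^{(n+1)}-v^{(n)}\|^\gamma<+\infty$ for every $\gamma>0$, and the identity of norms above transfers the conclusion to $(x^{(n)})_\nnn$. I do not foresee a serious obstacle: the only delicate part is checking that \cref{f:thesis} can be invoked uniformly across all $C\in\mathcal{C}$ with the single choice $K=V^\perp$, but this is immediate once the cones are written in homogeneous form and the normals are collected into the common finite-dimensional subspace $V$.
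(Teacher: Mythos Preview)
Your proposal is correct and follows essentially the same strategy as the paper: build a single finite-codimensional subspace $K$ contained in every $\bigcap_{i\in I_C}\ker a_i^C$, invoke \cref{f:thesis} uniformly to split each $P_C$ as $P_K+P_{D_C}P_{K^\perp}$, observe that the $K$-component of the iterates is frozen while the $K^\perp$-component runs a relaxed-projection scheme for the polyhedral cones $D_C$ in the Euclidean space $K^\perp$, and finish with \cref{t:GTpolycone}. The only cosmetic difference is that the paper first picks $K_C=\bigcap_{i\in I_C}\ker a_i^C$ for each $C$ and then sets $K=\bigcap_{C\in\mathcal{C}}K_C$, whereas you go directly to $K=V^\perp$ with $V=\spn\{a_i^C\}$; these two constructions yield the same $K$.
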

\begin{proof}
For each $C\in\mathcal{C}$, we pick
a finite-codimensional closed 
linear subspace $K_C$ of $X$ as in 
\cref{f:thesis}. 
Next, we set $K := \bigcap_{C\in\mathcal{C}} K_C$.
Then $K$ is a closed linear subspace of $X$, still 
finite-codimensional. 
By \cref{f:thesis} again, we have 
\begin{equation}
\label{e:boom}
(\forall C\in\mathcal{C})\quad
P_C = P_K + P_{D_C}P_{K^\perp},
\end{equation}
where $D_C = C \cap K^\perp$ is a nonempty polyhedral subset of 
the finite-dimensional space $K^\perp$. Note that $D_C$ is also a polyhedral cone because the intersection of a polyhedral cone and a finite-codimensional linear subspace is still polyhedral. 

The starting point $x_0$ is decomposed as 
$x^{(0)} = k^{(0)} + y^{(0)}$, where 
$k^{(0)} := P_Kx^{(0)} \in K$ and $y^{(0)} := P_{K^\perp}x^{(0)} \in K^\perp$. 
Observe that \cref{e:boom} implies that 
$(x^{(n)})_\nnn = (k^{(0)})_\nnn +(y^{(n)})_\nnn$, 
where $(y^{(n)})_\nnn$  
is generated using the finite collection of polyhedral cones 
$\{D_C\}_{C\in\mathcal{C}}$ in the finite-dimensional space $K^\perp$. Observe that
\begin{equation}
    \left(\forall n\in\mathbb{N}\right)\quad \norm{x^{(n+1)}-x^{(n)}}=\norm{y^{(n+1)}-y^{(n)}}.
\end{equation}
The result then follows from \cref{t:GTpolycone}.
\end{proof}

\section{Impossibility to drop the innate regularity assumption}\label{s:counterex}

The following example shows that we cannot drop the innate regularity assumption in \cref{f:GT}: 

\begin{example}[without innate regularity, G\"unt\"urk-Thao fails]
\label{ex:counterex}
Suppose that $X=\ell_2(\mathbb{N})$ with orthonormal basis $(e_n)_{n\in\mathbb{N}}$. Suppose that $(\theta_n)_{n\in\mathbb{N}}$ is a decreasing sequence of angles in $]0,\pi/2[$ with $\theta_n\to0$. Consider the collection $\mathcal{L} = \{L_1,L_2\}$ 
of two closed linear subspaces where $L_1:=\cspan\{\cos(\theta_k)e_{2k-1}+\sin(\theta_k)e_{2k}\mid k\in\mathbb{N}\smallsetminus\{0\}\}$ and $L_2:=\cspan\{e_{2k+1}\mid k\in\mathbb{N}\}$. We have
\begin{enumerate}[ref=\currentenv~\theexample(\roman*)]
    \item\label{ex:counterexi} $P_{L_1}x=\sum_{k\in\mathbb{N}\smallsetminus\{0\}}\left(x_{2k-1}\cos(\theta_k)+x_{2k}\sin(\theta_k)\right)\left(e_{2k-1}\cos(\theta_k)+e_{2k}\sin(\theta_k)\right)$.
    \item\label{ex:counterexii} $P_{L_2}x=(0,x_1,0,x_3,0,x_5,\dots)$.
    \item\label{ex:counterexiii} $\mathcal{L}$ is not innately regular.
\end{enumerate}
Set  $x^{(0)}:=\left(0,x_1,0,x_3,0,x_5,\dots\right)\in\ell_2(\mathbb{N})$ and generate the sequence of alternating projections via
\begin{equation}
    x^{(2n+1)} := P_{L_1}x^{(2n)}\text{ and }x^{(2n+2)} := P_{L_2}x^{(2n+1)}.\label{20252610a}
\end{equation}
Then:
\begin{enumerate}[ref=\currentenv~\theexample(\roman*)]
    \setcounter{enumi}{3}
    \item\label{ex:counterexiv} For all $n\in\mathbb{N}\smallsetminus\{0\}$, the iterates have the following form
    \begin{equation}
        \left(\forall k\in\mathbb{N}\smallsetminus\{0\}\right)\quad\begin{cases}
            x^{(2n-1)}_0=0,\\
            x^{(2n-1)}_{2k-1}=x_{2k-1}\cos^{2n}(\theta_{k}),\\
            x^{(2n-1)}_{2k}=x_{2k-1}\cos^{2n-1}(\theta_{k})\sin(\theta_{k}),\label{20252610b}
        \end{cases}
    \end{equation}
    and
    \begin{equation}
        \left(\forall k\in\mathbb{N}\smallsetminus\{0\}\right)\quad\begin{cases}
            x^{(2n)}_{2k-2}=0,\\
            x^{(2n)}_{2k-1}=x_{2k-1}\cos^{2n}(\theta_{k}).\label{20252610c}
        \end{cases}
    \end{equation}
    \item\label{ex:counterexv} For all $n\in\mathbb{N}\smallsetminus\{0\}$: $\norm{x^{(2n)}-x^{(2n-1)}}^2=\sum_{k=1}^\infty x^2_{2k-1}\cos^{4n}(\theta_{k})\left(\cos^{-2}(\theta_{k})-1\right)$.
\end{enumerate}

Let $\theta_0\in\left]0,\pi/2\right[$, $\theta_k:=\arccos\Big(\big(\frac{k}{k+1}\big)^{\frac{1}{4e^k}}\Big)$, and $x_{2k-1}:=\frac{1}{k^2}$ for all $k\in\mathbb{N}\smallsetminus\{0\}$. Then:
\begin{enumerate}[ref=\currentenv~\theexample(\roman*)]
    \setcounter{enumi}{5}
    \item\label{ex:counterexvi} For all $\gamma\in\left]0,2\right[$: $\displaystyle \sum_{n=0}^\infty\norm{x^{(n+1)}-x^{(n)}}^\gamma=+\infty$.
\end{enumerate}
\end{example}
\begin{proof}
    {\it(i)-(iii)}: See \cite[Example~4.1]{BE2005}.

    {\it(iv)}: We will prove this by induction. For the base case $n=1$, by using \cref{ex:counterexi}, \cref{ex:counterexii}, and \eqref{20252610a}, we obtain
    \begin{subequations}
        \begin{align}
        &x^{(1)}=P_{L_1}x^{(0)}=\sum_{k\in\mathbb{N}\smallsetminus\{0\}}x_{2k-1}\cos(\theta_k)\left(e_{2k-1}\cos(\theta_k)+e_{2k}\sin(\theta_k)\right)\\
        \Leftrightarrow&\left(\forall k\in\mathbb{N}\smallsetminus\{0\}\right)\quad\begin{cases}
            x^{(1)}_0=0,\\
            x^{(1)}_{2k-1}=x_{2k-1}\cos^{2}(\theta_{k}),\\
            x^{(1)}_{2k}=x_{2k-1}\cos(\theta_{k})\sin(\theta_{k}),
        \end{cases}
        \end{align}
    \end{subequations}
    and
    \begin{equation}
        \left(\forall k\in\mathbb{N}\smallsetminus\{0\}\right)\quad\begin{cases}
            x^{(2)}_{2k-2}=0,\\
            x^{(2)}_{2k-1}=x_{2k-1}\cos^{2}(\theta_{k}).
        \end{cases}
    \end{equation}
    Assume that \eqref{20252610b} and \eqref{20252610c} is true for $n=m\in\mathbb{N}\smallsetminus\{0\}$. In particular, for all $k\in\mathbb{N}\smallsetminus\{0\}$, we have
    \begin{equation}
        \begin{cases}
            x^{(2m)}_{2k-2}=0,\\
            x^{(2m)}_{2k-1}=x_{2k-1}\cos^{2m}(\theta_{k}),
        \end{cases}
    \end{equation}
    which implies
    \begin{subequations}
        \begin{align}
            &x^{(2m+1)}=P_{L_1}x^{(2m)}=\sum_{k\in\mathbb{N}\smallsetminus\{0\}}x_{2k-1}\cos^{2m+1}(\theta_k)\left(e_{2k-1}\cos(\theta_k)+e_{2k}\sin(\theta_k)\right)\\
            \Leftrightarrow&\left(\forall k\in\mathbb{N}\smallsetminus\{0\}\right)\quad\begin{cases}
                x^{(2m+1)}_0=0,\\
                x^{(2m+1)}_{2k-1}=x_{2k-1}\cos^{2m+2}(\theta_{k}),\\
                x^{(2m+1)}_{2k}=x_{2k-1}\cos^{2m+1}(\theta_{k})\sin(\theta_{k}).
            \end{cases}
        \end{align}
    \end{subequations}
    By \cref{ex:counterexii}, we are done.

    {\it(v)}: From \eqref{20252610b} and \eqref{20252610c}, we get that
    \begin{subequations}
        \begin{align}
            (\forall n\in\mathbb{N}\smallsetminus\{0\})\quad\norm{x^{(2n)}-x^{(2n-1)}}^2&=\sum_{k=1}^\infty x^2_{2k-1}\cos^{4n-2}(\theta_k)\sin^2(\theta_k)\\
            &=\sum_{k=1}^\infty x^2_{2k-1}\cos^{4n}(\theta_{k})\big(\cos^{-2}(\theta_{k})-1\big).
        \end{align}
    \end{subequations}

    {\it(vi)}: From the choice of $x^{(0)}$, $(\theta_k)_{k\in\mathbb{N}}$, and \cref{ex:counterexv}, we obtain
    \begin{equation}
        (\forall n\in\mathbb{N}\smallsetminus\{0\})\quad\norm{x^{(2n)}-x^{(2n-1)}}^2=\sum_{k=1}^\infty\frac{1}{k^4}\Big(\frac{k}{k+1}\Big)^{\frac{n}{e^k}}\bigg(\Big(1+\frac{1}{k}\Big)^{\frac{1}{2e^k}}-1\bigg).\label{20252610d}
    \end{equation}
    Define $\delta_n:=\ln(n)-\lf\ln(n)\rf$ for all $n\geq3$. Observe that the sequence $(\delta_n)_{n\geq3}$ lies in $[0,1[$. Since in \eqref{20252610d} all components of the infinite series are positive, for all \(n \geq3\) we obtain
    \begin{subequations}
        \begin{align}
            \norm{x^{(2n)}-x^{(2n-1)}}^2&\geq\frac{1}{(\ln(n)-\delta_n)^4}\left(\frac{\ln(n)-\delta_n}{\ln(n)-\delta_n+1}\right)^{e^{\delta_n}}\bigg(
            \Big(1+\frac{1}{\ln(n)-\delta_n}\Big)^{\frac{e^{\delta_n}}{2n}}-1\bigg)\\
            &=\frac{1}{(\ln(n)-\delta_n)^4}\left(\frac{\ln(n)-\delta_n}{\ln(n)-\delta_n+1}\right)^{e^{\delta_n}}\left(e^{\frac{e^{\delta_n}}{2n}\ln\left(1+\frac{1}{\ln(n)-\delta_n}\right)}-1\right).
        \end{align}
    \end{subequations}
    Using the inequalities
    \begin{equation}
        (\forall x>0)\quad e^x-1\geq x\quad\text{and}\quad\ln\Big(1+\frac{1}{x}\Big)\geq\frac{1}{x+1},
    \end{equation}
    we get that
    \begin{subequations}
        \begin{align}
            (\forall n\geq 3)\quad
            &\norm{x^{(2n)}-x^{(2n-1)}}^2\\            &\geq\frac{1}{(\ln(n)-\delta_n)^4}\left(\frac{\ln(n)-\delta_n}{\ln(n)-\delta_n+1}\right)^{e^{\delta_n}}\frac{e^{\delta_n}}{2n}\frac{1}{\ln(n)-\delta_n+1}\\
            &=\frac{e^{\delta_n}}{2n(\ln(n)-\delta_n)^{4-e^{\delta_n}}(\ln(n)-\delta_n+1)^{1+e^{\delta_n}}}\\
            \ &\geq\frac{1}{2n\left(\ln(n)+1\right)^5}, 
        \end{align}
    \end{subequations}
    where the last inequality follows from 
    $0<\ln(n)-\delta_n<\ln(n)+1$. 
    This implies
    \begin{equation}
        \sum_{n=0}^\infty\norm{x^{(n+1)}-x^{(n)}}^\gamma\geq\sum_{n=3}^\infty\norm{x^{(2n)}-x^{(2n-1)}}^\gamma\geq\sum_{n=3}^\infty\frac{1}{(2n)^{\frac{\gamma}{2}}(\ln(n)+1)^{\frac{5\gamma}{2}}}.
    \end{equation}
    Since $\tfrac{(2n)^{\gamma/2}(\ln(n)+1)^{5\gamma/2}}{n^{\gamma/2}\ln^{5\gamma/2}(n)}\to2^{\gamma/2}$, $\gamma\in]0,2[$, and $\sum_{n\geq3}\tfrac{1}{n^p\ln^q(n)}=+\infty$ for all $p\in\left]0,1\right[$ and $q>0$ (see \cite[80--Theorem, Page~124]{Konrad}), we are done.
\end{proof}

\section{Impossibility to extend from polyhedral cones
to closed convex sets}\label{s:nonpolyconv}

In this section, we will construct a family of collections, each containing one polyhedral cone and one set that is neither a cone nor a polyhedron, such that \cref{t:GTpolyconeinfi} fails.

We consider the case where our collection consists of
\begin{equation}
    L:=\mathbb{R}\times\{0\}\quad\text{and}\quad C:=\epi f,
\end{equation}
where $f:\mathbb{R}\to\left]-\infty,+\infty\right]$ is
\begin{equation}
    \text{convex, differentiable on $[0,\delta]$, $\delta\in\mathbb{R}_{++}$, lower semicontinuous, and proper,}
\end{equation}
with
\begin{equation}
    f\text{ even, }f(0)=0,f>0\text{ otherwise, and }f'(0)=0.
\end{equation}

The projection onto $L$ is simple:
\begin{equation}
    P_L\colon \RR^2\to\RR^2 \colon (x,r)\mapsto(x,0).\label{251213a}
\end{equation}

The following fact gives us the projection onto $C$ (see \cite[Fact~4.1 and Corollary~4.2]{HM2016}):
\begin{fact}\label{f:epi}
    Let $(x,r)\in(\dom f\times\RR)\smallsetminus\epi f$. Then $P_{\epi f}(x,r)=(y,f(y))$, where $y$ satisfies $x\in y+(f(y)-r)\partial f(y)$. Moreover, $y=0$ if $x=0$, and $y$ lies strictly between $x$ and 0 otherwise.
\end{fact}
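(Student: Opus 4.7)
The plan is to analyze the convex optimization problem $\min\{(y-x)^2+(s-r)^2 : (y,s)\in\epi f\}$, whose minimizer exists and is unique because $\epi f$ is nonempty, closed, and convex in $\RR^2$. Let $(y,s)$ denote this minimizer.

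First I would establish $s=f(y)$. Suppose for contradiction that $s>f(y)$. If $r\leq f(y)$, then $(f(y)-r)^2<(s-r)^2$, so $(y,f(y))\in\epi f$ achieves a strictly smaller objective; if instead $r>f(y)$, then $(y,r)\in\epi f$ has strictly smaller distance. Either case contradicts optimality.

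Second, I would derive the subgradient characterization. With $s=f(y)$ and the constraint $f(y)-s\leq 0$, the KKT system produces a multiplier $\mu=2(s-r)$ together with $0\in 2(y-x)+\mu\,\partial f(y)$, which simplifies to $x\in y+(f(y)-r)\,\partial f(y)$. The case $x=0$ follows immediately from the evenness of $f$: if $y$ is the minimizer then so is $-y$, and uniqueness of the projection forces $y=0$.

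Finally, to show that $y$ lies strictly between $0$ and $x$ when $x\neq 0$ (WLOG $x>0$), I would use three structural facts: (a) $f$ is nondecreasing on $[0,+\infty)$, since it is convex with global minimizer $0$; (b) every $\xi\in\partial f(y)$ with $y>0$ satisfies $\xi\geq f(y)/y>0$, obtained by evaluating the subgradient inequality at $z=0$; and (c) $\partial f(0)=\{0\}$ since $f$ is differentiable at $0$ with $f'(0)=0$. Then if $y>x$, the point $(x,f(y))\in\epi f$ (valid by (a)) is strictly closer to $(x,r)$; if $y<0$, the point $(-y,f(y))\in\epi f$ satisfies $(-y-x)^2-(y-x)^2=4xy<0$; if $y=x$, the subgradient inclusion forces $0\in(f(x)-r)\,\partial f(x)$, and since $f(x)>r$ we would need $0\in\partial f(x)$, contradicting (b); if $y=0$, it forces $x\in -r\cdot\{0\}=\{0\}$ by (c), contradicting $x\neq 0$. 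The main technical point is (b), which must also cover the regime $y>\delta$ where $f$ is not assumed differentiable; that is exactly where the subgradient inequality at $z=0$, combined with $f(0)=0<f(y)$, is indispensable.
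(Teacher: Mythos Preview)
Your argument is correct. Note, however, that the paper does not supply its own proof of this statement: it is recorded as a \emph{Fact} and attributed to \cite[Fact~4.1 and Corollary~4.2]{HM2016}, so there is no in-paper argument to compare against.

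Your self-contained derivation stands on its own. The reduction to $s=f(y)$ is standard; the subgradient characterization can be obtained either via your KKT route (Slater's condition holds because $(0,1)\in\inte(\epi f)$, so nonsmooth KKT is legitimate) or, more directly, from the obtuse-angle characterization of the projection: for all $z\in\dom f$ and $t\geq f(z)$,
\[
(x-y)(z-y)+(r-f(y))(t-f(y))\leq 0,
\]
which, after noting $r<f(y)$ and setting $t=f(z)$, gives $\tfrac{x-y}{f(y)-r}\in\partial f(y)$. Your case analysis for the location of $y$ when $x\neq 0$ is complete; the key inequality (b), namely $\xi\geq f(y)/y>0$ for every $\xi\in\partial f(y)$ with $y>0$, is precisely what is needed to exclude $y=x$ without invoking differentiability outside $[0,\delta]$.
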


Let $x^{(0)}=(u_0,0)\in\left]0,\delta\right[\times\{0\}\subseteq L$. Generate the sequence of alternating projections via
\begin{equation}
    x^{(2n+1)} := P_{C}x^{(2n)}\text{ and }
    x^{(2n+2)} := P_{L}x^{(2n+1)}.\label{251213b}
\end{equation}
{ An instance of the trajectory is illustrated in \cref{fig:placeholder}.}

\begin{figure}[H]
    \centering
    \includegraphics[width=\linewidth]{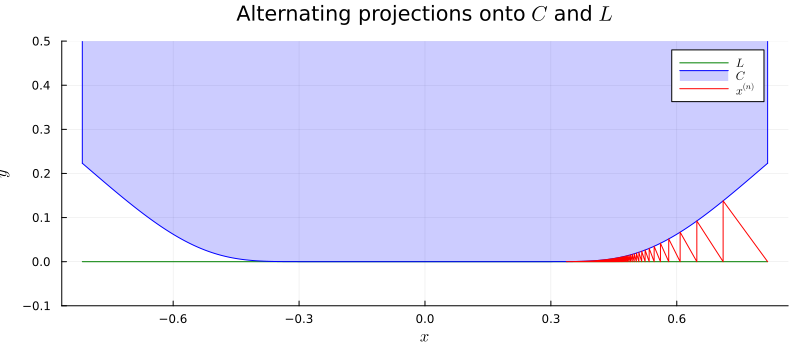}
    \caption{The trajectory of \cref{251213b} for $f(x)=\exp(- x^{-2})$ with domain $\bigl[-\sqrt{2/3},\sqrt{2/3}\bigr]$}
    \label{fig:placeholder}
\end{figure}

\begin{proposition}\label{p:R2sequence1}
    The iterates have the following form
    \begin{equation}
        (\forall n\in\mathbb{N}\smallsetminus\{0\})\quad x^{(2n-1)}=(u_n,f(u_n))\quad\text{and}\quad x^{(2n)}=(u_n,0),
    \end{equation}
    where
    \begin{equation}
        (\forall n\in\mathbb{N}\smallsetminus\{0\})\quad u_{n-1}=u_n+f(u_n)f'(u_n),
    \end{equation}
    and $(u_n)_{n\in\mathbb{N}}$ is a positive sequence strictly decreasing to $0$.
\end{proposition}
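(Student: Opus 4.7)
The plan is a direct induction on $n$, using Fact~\ref{f:epi} to identify each projection onto $C=\epi f$ and the explicit formula \eqref{251213a} for the projection onto $L$.

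\textbf{Base step ($n=1$).} Since $u_0\in(0,\delta)\subseteq\dom f$ and $f(u_0)>0$, the point $x^{(0)}=(u_0,0)$ lies in $(\dom f\times\RR)\smallsetminus\epi f$. Fact~\ref{f:epi} then yields $x^{(1)}=P_C x^{(0)}=(y,f(y))$ with $y$ strictly between $0$ and $u_0$ and $u_0\in y+f(y)\partial f(y)$. Since $f$ is differentiable on $(0,\delta)\ni y$, we have $\partial f(y)=\{f'(y)\}$, hence $u_0=y+f(y)f'(y)$; setting $u_1:=y$, formula \eqref{251213a} produces $x^{(2)}=(u_1,0)$.

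\textbf{Inductive step.} Assuming $x^{(2n-2)}=(u_{n-1},0)$ with $u_{n-1}\in(0,u_{n-2})\subseteq(0,\delta)$, I apply the base-step argument verbatim, replacing $u_0$ by $u_{n-1}$. This produces $u_n\in(0,u_{n-1})$ satisfying $u_{n-1}=u_n+f(u_n)f'(u_n)$, whence $x^{(2n-1)}=(u_n,f(u_n))$ and $x^{(2n)}=(u_n,0)$. Strict monotonicity and positivity of $(u_n)_{n\in\NN}$ are built into Fact~\ref{f:epi}, so $u_\infty:=\lim_n u_n\in[0,u_0)$ exists.

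\textbf{Convergence to zero.} A convex function differentiable on an interval is automatically $C^1$ there, so passing to the limit in the recurrence gives $f(u_\infty)f'(u_\infty)=0$. The only (mild) obstacle is ruling out $u_\infty>0$: by convexity, $f'$ is nondecreasing on $[0,\delta]$, so $f'(u_\infty)=0$ combined with $f'(0)=0$ would force $f'\equiv 0$ on $[0,u_\infty]$; then $f$ would be constant on that interval, and $f(0)=0$ would contradict $f>0$ on $(0,\delta]$. Hence $f'(u_\infty)>0$, which combined with $f(u_\infty)f'(u_\infty)=0$ and the hypothesis $f>0$ on $(0,\delta]$ yields $u_\infty=0$.
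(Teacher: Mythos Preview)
Your proof is correct. For the shape of the iterates and the recurrence $u_{n-1}=u_n+f(u_n)f'(u_n)$ you proceed exactly as the paper does, via Fact~\ref{f:epi} and \eqref{251213a}. The only genuine difference is how you establish $u_n\to 0$: the paper simply invokes Bregman's alternating-projection theorem \cite[Theorem~1]{Bregman1965}, noting that $L\cap C=\{(0,0)\}$ (since $f(0)=0$ and $f>0$ elsewhere), whereas you pass to the limit in the recurrence---using that a convex differentiable function is $C^1$---to obtain $f(u_\infty)f'(u_\infty)=0$, and then exclude $u_\infty>0$ via the monotonicity of $f'$. Your argument is self-contained and avoids the external reference; the paper's is a one-liner that reuses a result already in its bibliography.
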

\begin{proof}
    Combine \cref{251213a}, \cref{f:epi}, \cref{251213b}, and \cite[Theorem 1]{Bregman1965}.
\end{proof}

We first analyze the asymptotic behavior of $(u_n)_{n\in\mathbb{N}}$.

\begin{proposition}\label{p:R2sequence2}
    Suppose there exists $q\in\mathbb{R}$ and $p,\alpha\in\mathbb{R}_{++}$ such that
    \begin{equation}
        \lim_{x\searrow0}\frac{f(x)f'(x)}{x^q\exp(-\alpha x^{-p})}=c_{q,\alpha,p}>0.\label{251213c}
    \end{equation}
    Then $\displaystyle \frac{u_n}{\big(\frac{\alpha}{\ln(n)}\big)^{1/p}}\to1$ as $n\to\infty$.
\end{proposition}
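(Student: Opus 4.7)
The plan is to introduce a primitive $\Phi$ whose increments along $(u_n)_{n\in\mathbb{N}}$ are asymptotically constant, apply the Stolz--Ces\`aro theorem to obtain $\Phi(u_n)\sim c_{q,\alpha,p}\,n$, and then invert an asymptotic expansion of $\Phi(u)$ as $u\searrow 0$ derived by integration by parts.

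First, combining \cref{p:R2sequence1} with the hypothesis \cref{251213c} and $u_n\searrow 0$, I would record
\begin{equation}
u_{n-1}-u_n \;=\; f(u_n)f'(u_n)\;\sim\; c_{q,\alpha,p}\,u_n^{q}\,e^{-\alpha u_n^{-p}}\qquad(n\to\infty).
\end{equation}
The super-exponential smallness of this increment in $u_n^{-p}$ immediately yields $u_{n-1}/u_n\to 1$ and $u_{n-1}^{-p}-u_n^{-p}\to 0$ (the latter via the mean-value theorem, since $u_n^{-p}-u_{n-1}^{-p}\leq p\,u_n^{-p-1}(u_{n-1}-u_n)$, and the polynomial factor is dominated by the exponential). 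Motivated by the continuous analogue $-du/dt=c\,u^{q}e^{-\alpha u^{-p}}$, I set
\begin{equation}
\Phi(u)\;:=\;\int_u^{u_0}\frac{e^{\alpha v^{-p}}}{v^q}\,dv,
\end{equation}
which is finite, strictly decreasing on $(0,u_0]$, and blows up as $u\searrow 0$, so $(\Phi(u_n))_{n\in\mathbb{N}}$ is strictly increasing and unbounded.

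Next, Stolz--Ces\`aro applied to $\Phi(u_n)/n$ reduces the problem to proving $\Phi(u_n)-\Phi(u_{n-1})\to c_{q,\alpha,p}$. By the mean value theorem,
\begin{equation}
\Phi(u_n)-\Phi(u_{n-1})\;=\;\frac{e^{\alpha\xi_n^{-p}}}{\xi_n^{q}}\,(u_{n-1}-u_n)\quad\text{for some }\xi_n\in(u_n,u_{n-1}),
\end{equation}
and the two smallness facts above force $e^{\alpha\xi_n^{-p}}\xi_n^{-q}\big/\bigl(e^{\alpha u_n^{-p}}u_n^{-q}\bigr)\to 1$; combined with the asymptotic for $u_{n-1}-u_n$, the right-hand side tends to $c_{q,\alpha,p}$. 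Hence $\Phi(u_n)/n\to c_{q,\alpha,p}$.

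Finally, to translate this into information about $u_n$ itself, I would expand $\Phi(u)$ as $u\searrow 0$ by integration by parts based on the identity $e^{\alpha v^{-p}}/v^q=-(\alpha p)^{-1}v^{p+1-q}\,(d/dv)e^{\alpha v^{-p}}$, giving
\begin{equation}
\Phi(u)\;=\;\frac{u^{p+1-q}}{\alpha p}\,e^{\alpha u^{-p}}\;+\;\frac{p+1-q}{\alpha p}\int_u^{u_0}v^{p-q}\,e^{\alpha v^{-p}}\,dv\;+\;O(1);
\end{equation}
a second integration by parts on the remainder integral shows its leading order is $\tfrac{u^{2p+1-q}}{\alpha p}e^{\alpha u^{-p}}$, which is a factor $u^{p}$ smaller than the boundary term and hence negligible. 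Consequently $\Phi(u_n)\sim(\alpha p)^{-1}u_n^{p+1-q}e^{\alpha u_n^{-p}}\sim c_{q,\alpha,p}\,n$, so taking logarithms gives $\alpha u_n^{-p}+(p+1-q)\ln u_n=\ln n+O(1)$. Since $u_n^{-p}\to\infty$, the polynomial-in-$u_n$ contribution is only $O(\ln\ln n)$ and is absorbed, leaving $\alpha u_n^{-p}\sim\ln n$, i.e.\ $u_n\big/(\alpha/\ln n)^{1/p}\to 1$. The main technical hurdle will be the quantitative integration-by-parts estimate for the remainder and the verification that $\Phi$ does not vary too quickly on $[u_n,u_{n-1}]$ to block Stolz--Ces\`aro; both reduce to the two smallness statements about $u_{n-1}-u_n$ recorded above.
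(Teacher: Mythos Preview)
Your proposal is correct and follows essentially the same route as the paper: both introduce the primitive $\Phi(u)=\int_u^{u_0}v^{-q}e^{\alpha v^{-p}}\,dv$, show its increments along $(u_n)$ tend to $c_{q,\alpha,p}$, Ces\`aro-average to obtain $\Phi(u_n)\sim c_{q,\alpha,p}\,n$, extract the asymptotic $\Phi(u)\sim(\alpha p)^{-1}u^{\,p+1-q}e^{\alpha u^{-p}}$, and finish by taking logarithms. The only differences are cosmetic---you use the mean value theorem and integration by parts where the paper uses a monotonicity sandwich and, after the substitution $t=x^{-p}$, L'H\^opital's rule.
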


\begin{proof}
    By \cref{p:R2sequence1}, we obtain $u_n\searrow0^+$ and
    \begin{equation}
        (\forall n\in\mathbb{N}\smallsetminus\{0\})\quad u_{n-1}=u_n+f(u_n)f'(u_n).\label{251213e}
    \end{equation}
    This together with \cref{251213c} yields
    \begin{equation}
        (\forall n\in\mathbb{N}\smallsetminus\{0\})\quad \frac{f(u_n)f'(u_n)}{u_n^q\exp\big(-\alpha u_n^{-p}\big)}=\frac{u_{n-1}-u_n}{u_n^q\exp\big(-\alpha u_n^{-p}\big)}.\label{251213f}
    \end{equation}
    By checking the derivative, 
    $\displaystyle \frac{1}{x^q\exp(-\alpha x^{-p})}$ is decreasing on $\mathbb{R}_{++}$ if $q\geq0$ and 
    on $\Bigl]0,\left(\frac{-q}{\alpha p}\right)^{-1/p}\Bigr[$ if $q<0$. WLOG, assume that $u_0$ lies inside the decreasing interval. Since $u_{n-1}>u_n$, we get
    \begin{equation}
        (\forall n\in\mathbb{N}\smallsetminus\{0\})\;\frac{u_{n-1}-u_n}{u_n^q\exp\big(-\alpha u_n^{-p}\big)}\geq\int_{u_n}^{u_{n-1}}\frac{dx}{x^q\exp(-\alpha x^{-p})}\geq\frac{u_{n-1}-u_n}{u_{n-1}^q\exp\big(-\alpha u_{n-1}^{-p}\big)}.\label{251213i}
    \end{equation}
    It follows from $u_n\searrow0^+$, \cref{251213c}, and \cref{251213f} that
    \begin{equation}
        \lim_{n\to\infty}\frac{u_{n-1}-u_n}{u_n^q\exp\big(-\alpha u_n^{-p}\big)}=c_{q,\alpha,p}>0.\label{251213g}
    \end{equation}
    Observe that \cref{251213e} 
    and \cref{251213c} imply 
    \begin{subequations}
            \label{251213h}
        \begin{align}
            \lim_{n\to\infty}\frac{u_{n-1}}{u_n}&=1+\lim_{n\to\infty}\frac{f(u_n)f'(u_n)}{u_n}\\
            &=1+\lim_{n\to\infty}\frac{f(u_n)f'(u_n)}{u_n^q\exp\big(-\alpha u_n^{-p}\big)}\frac{u_n^q\exp\big(-\alpha u_n^{-p}\big)}{u_n}\\ &=1+c_{q,\alpha,p}\lim_{n\to\infty}\frac{u_n^{q-1}}{\exp\big(\alpha u_n^{-p}\big)}\\
            &=1.
        \end{align}
    \end{subequations}
    By combining \cref{251213g} and \cref{251213h}, we obtain
    \begin{subequations}
        \begin{align}
            \lim_{n\to\infty}\frac{u_{n-1}-u_n}{u_{n-1}^q\exp\big(-\alpha u_{n-1}^{-p}\big)}&=\lim_{n\to\infty}\frac{u_{n-1}-u_n}{u_{n}^q\exp\big(-\alpha u_{n}^{-p}\big)}\frac{u_{n}^q\exp\big(-\alpha u_{n}^{-p}\big)}{u_{n-1}^q\exp\big(-\alpha u_{n-1}^{-p}\big)}\\
            &=c_{q,\alpha,p}\lim_{n\to\infty}\frac{u_n^q}{u_{n-1}^q}\exp\bigg(-\alpha u_n^{-p}\Big(1-\frac{u_{n-1}^{-p}}{u_n^{-p}}\Big)\bigg)\\
            &=c_{q,\alpha,p}\exp\bigg(-\alpha\lim_{n\to\infty} u_n^{-p}\Big(1-\frac{u_{n}^{p}}{u_{n-1}^{p}}\Big)\bigg).
        \end{align}
    \end{subequations}
    By the Taylor expansion of $1-x^p$ at $x=1$, we get that $\frac{1-x^p}{p(1-x)}\to 1$ as $x\to1$. This together with \cref{251213h} yields $\frac{1-\frac{u_{n}^{p}}{u_{n-1}^{p}}}{p\left(1-\frac{u_n}{u_{n-1}}\right)}\to1$. Hence,
    \begin{subequations}
            \label{251312j}
        \begin{align}
            \lim_{n\to\infty}\frac{u_{n-1}-u_n}{u_{n-1}^q\exp\big(-\alpha u_{n-1}^{-p}\big)}=\:\,&c_{q,\alpha,p}\exp\bigg(-\alpha\lim_{n\to\infty} u_n^{-p}\Big(1-\frac{u_{n}^{p}}{u_{n-1}^{p}}\Big)\bigg)\\
            =\:\,&c_{q,\alpha,p}\exp\left(-\alpha p\lim_{n\to\infty} u_n^{-p}\Big(1-\frac{u_{n}}{u_{n-1}}\Big)\right)\\
            \eqoverset{\text{\cref{251213h}}}{=}
            \ &c_{q,\alpha,p}\exp\bigg(-\alpha p\lim_{n\to\infty} \frac{u_{n-1}-u_{n}}{u_{n}^{p+1}}\bigg)\\
            \eqoverset{\text{\cref{251213g}}}{=}
            \ &c_{q,\alpha,p}\exp\bigg(-\alpha pc_{q,\alpha,p}\lim_{n\to\infty} \frac{u_n^q\exp\big(-\alpha u_n^{-p}\big)}{u_{n}^{p+1}}\bigg)\\
            =\:\,&c_{q,\alpha,p}\exp\left(0\right)\\
            =\:\,&c_{q,\alpha,p}.
        \end{align}
    \end{subequations}
    From \cref{251213i}, \cref{251213g}, and \cref{251312j}, we obtain
    \begin{equation}
    \label{260101a}
        \lim_{n\to\infty}\int_{u_n}^{u_{n-1}}\frac{dx}{x^q\exp(-\alpha x^{-p})}=c_{q,\alpha,p}.
    \end{equation}
    This implies
    \begin{equation}
        c_{q,\alpha,p}=\lim_{n\to\infty}\frac{1}{n}\sum_{k=1}^n\int_{u_k}^{u_{k-1}}\frac{dx}{x^q\exp(-\alpha x^{-p})}=\lim_{n\to\infty}\frac{1}{n}\int_{u_n}^{u_{0}}\frac{dx}{x^q\exp(-\alpha x^{-p})}.\label{251214a}
    \end{equation}
    By a change of variable $t:=x^{-p}$, we obtain
    \begin{subequations}
            \label{251214b}
        \begin{align}
            \int_{u_n}^{u_{0}}\frac{dx}{x^q\exp(-\alpha x^{-p})}&=\int_{u_n}^{u_{0}}x^{-q}\exp(\alpha x^{-p})dx\\
            &=\int_{u_n^{-p}}^{u_{0}^{-p}}x^{-q}\exp(\alpha t)\frac{dt}{-px^{-p-1}}\\
            &=\frac{1}{p}\int_{u_0^{-p}}^{u_{n}^{-p}}t^{\frac{-q+p+1}{-p}}\exp(\alpha t)dt.
        \end{align}
    \end{subequations}
    Set $F(x):=\int_{u_0^{-p}}^{x}t^{\frac{-q+p+1}{-p}}\exp(\alpha t)dt$. By the Fundamental Theorem of Calculus, we get $F'(x)=x^{\frac{-q+p+1}{-p}}\exp(\alpha x)$. Consequently, $F''(x)=\left(\frac{-q+p+1}{-p}x^{\frac{-q+2p+1}{-p}}+\alpha x^{\frac{-q+p+1}{-p}}\right)\exp(\alpha x)$.
    Since $t^{\frac{-q+p+1}{-p}}\geq\exp(-\alpha t/2)$ for $t$ large enough, we have $F(x)\to+\infty$ and $F'(x)\to+\infty$ as $x\to+\infty$. Hence, by L'Hôpital's rule, we obtain
    \begin{equation}
        \lim_{x\to+\infty}\frac{F(x)}{F'(x)}
        =\lim_{x\to+\infty}\frac{F'(x)}{F''(x)}
        = \lim_{x\to+\infty} 
        \frac{1}{\frac{-q+p+1}{-p}x^{-1}+\alpha}=
        \frac{1}{\alpha}>0.\label{251214c}
    \end{equation}
    Consequently, we get that
    \begin{subequations}
        \begin{align}
            c_{q,\alpha,p}&=\lim_{n\to\infty}\frac{1}{n}\int_{u_n}^{u_{0}}\frac{dx}{x^q\exp(-\alpha x^{-p})}\\
            &=\lim_{n\to\infty}\frac{1}{n}\frac{1}{p}\int_{u_0^{-p}}^{u_{n}^{-p}}t^{\frac{-q+p+1}{-p}}\exp(\alpha t)dt\\
            &=\lim_{n\to\infty}\frac{1}{n}\frac{1}{p}\frac{1}{\alpha}F'(u_n^{-p})\\
            &=\frac{1}{p\alpha}\lim_{n\to\infty}\frac{1}{n}u_n^{-q+p+1}\exp\big(\alpha u_n^{-p}\big),
        \end{align}
    \end{subequations}
    This implies
    \begin{subequations}
        \begin{align}
            \lim_{n\to\infty}\frac{\ln\big(u_n^{-q+p+1}\exp(\alpha u_n^{-p})\big)}{\ln(n)}&=\lim_{n\to\infty}\frac{\ln\Big(\frac{u_n^{-q+p+1}\exp(\alpha u_n^{-p})}{n}\Big)+\ln(n)}{\ln(n)}\\
            &=0+\lim_{n\to\infty}\frac{\ln(n)}{\ln(n)}\\
            &=1.
        \end{align}
    \end{subequations}
    Since
    \begin{equation}
        \lim_{n\to\infty}\frac{\ln\big(u_n^{-q+p+1}
        \exp(\alpha u_n^{-p})\big)}{\alpha u_n^{-p}}=\lim_{n\to\infty}\frac{(-q+p+1)\ln\left(u_n\right)+\alpha u_n^{-p}}{\alpha u_n^{-p}}=1,
    \end{equation}
    we obtain $\frac{\alpha u_n^{-p}}{\ln(n)}\to 1$, i.e., $\frac{u_n}{\big(\frac{\alpha}{\ln(n)}\big)^{1/p}}\to1$ as $n\to\infty$. 
\end{proof}

We are now ready to announce our counterexample:
\begin{example}[G\"unt\"urk-Thao fails for a line and a
convex set in $\RR^2$]
\label{e:notcone}
    Let $\beta\in \mathbb{R}_{++}$ and 
    $r\in \{2,4,6,8,\ldots\}$. 
    Consider the function $f(x)=\exp(-\beta x^{-r})$ with domain $\bigl[-\left(\beta r/(r+1)\right)^{1/r},\left(\beta r/(r+1)\right)^{1/r}\bigr]$. 
    The derivative of $f$ is 
    \begin{equation}
        f'(x)=\beta rx^{-r-1}\exp(-\beta x^{-r}).
    \end{equation}
    Observe that the assumption of \cref{p:R2sequence2} is satisfied with $q=-r-1$, $\alpha=2\beta$, $p=r$, and $c_{q,\alpha,p}=r\beta$. Hence, we obtain
    \begin{equation}
        1=\lim_{n\to\infty}\frac{u_n}{\left(\frac{\alpha}{\ln(n)}\right)^{1/p}}=\lim_{n\to\infty}\frac{u_n}{\left(\frac{2\beta}{\ln(n)}\right)^{1/r}},
    \end{equation}
    which implies
    \begin{equation}
        (\forall\varepsilon>0)(\exists m_\varepsilon\in\mathbb{N})(\forall n\geq m_\varepsilon)\quad u_n\geq\left(\left(1-\frac{\varepsilon}{2}\right)\left(\frac{2\beta}{\ln(n)}\right)\right)^{1/r}.\label{260106a}
    \end{equation}
    This combined with \cref{p:R2sequence1} yields $(\forall\varepsilon>0)(\exists m_\varepsilon\in\mathbb{N})(\forall\gamma\in\left]0,2-\varepsilon\right])$
    \begin{subequations}
        \begin{align}
            \sum_{n=0}^{\infty}\norm{x^{(n+1)}-x^{(n)}}^\gamma&\geq\sum_{n=m_\varepsilon}^{\infty}\norm{x^{(2n)}-x^{(2n-1)}}^\gamma\notag\\
            &=\sum_{n=m_\varepsilon}^{\infty}\exp\left(-\gamma\beta u_n^{-r}\right)\tag{by \cref{p:R2sequence1} }\\
            &\geq\sum_{n=m_\varepsilon}^{\infty}\exp\left(-\gamma\frac{\ln(n)}{2-\varepsilon}\right)\tag{by \cref{260106a} }\\
            &=\sum_{n=m_\varepsilon}^{\infty}n^{-\frac{\gamma}{2-\varepsilon}}=+\infty;\notag
        \end{align}
    \end{subequations}
    consequently, 
    \begin{equation}
    \sum_{n=0}^{\infty}\norm{x^{(n+1)}-x^{(n)}}^\gamma=+\infty \quad 
    \text{for all $\gamma\in\left]0,2\right[$.}
    \end{equation}
\end{example}

\begin{remark}
    In \cite[Example 4.10]{HM2016}, the authors also study the asymptotic behavior of $u_n$ in \cref{e:notcone} with $\beta=1$ and $r=2$. They showed that $\frac{1}{u_n}= O\left(n^{1/t}\right)$ for all $t\in\mathbb{R}_{++}$. Using \cref{p:R2sequence2}, we obtain $\frac{1}{u_n}\sim \sqrt{\frac{\ln(n)}{2}}$, which is a sharper result.
\end{remark}

\section{Conification and limiting examples for nonpolyhedral cones in $\mathbb{R}^3$}

\label{s:last}

In this section, we will construct a family of collections, each containing one polyhedral cone and one non-polyhedral cone, such that \cref{t:GTpolyconeinfi} fails. The construction is based on the conification of \cref{e:notcone}.
 Because the counterexample is in $\RR^3$, we note that it is optimal in the sense that such an example
 cannot be found in $\RR$ or $\RR^2$, where all cones 
 are automatically polyhedral. 

Let $C$ be a closed convex subset of $X$ such that $0\in C$. Recall that the \emph{conification} (also known as \emph{homogenization}) of $C$ is the cone
\begin{equation}
    K:=\ccone(C\times\{1\}) \subseteq X\times\RR.
\end{equation}
We consider the case where our collection consists of
\begin{equation}
    H:=\mathbb{R}\times\{0\}\times\mathbb{R}\quad\text{and}\quad 
    K:=\ccone(\epi f\times\{1\}) \subseteq \RR\times\RP\times\RP,
\end{equation}
where $f:\mathbb{R}\to\left]-\infty,+\infty\right]$ is
\begin{equation}
    \text{convex, differentiable on $[0,\delta]$, $\delta\in\mathbb{R}_{++}$, lower semicontinuous, and proper,}
\end{equation}
with
\begin{equation}
    f\text{ even, }f(0)=0,f>0\text{ otherwise, and }f'(0)=0.
\end{equation}

The projection onto $H$ is simple:
\begin{equation}
    P_H\colon \RR^3\to\RR^3 \colon (x_1,x_2,x_3)\mapsto(x_1,0,x_3).\label{251215a}
\end{equation}
The following fact gives a formula for the projection onto $K$ in terms of $C$ and its recession cone\footnote{The \emph{recession cone} of $C$ is defined by $\rec(C)=\{x\in X\mid x+C\subseteq C\}$.} (see \cite[Theorem~4.1 and 5.1]{BTH2023}):
\begin{fact}\label{f:hcone}
    For all $(y,s)\in X\times\RR$, the function
    \begin{align}
        \Psi_{(y,s)}:\RR\to\RR:\alpha\mapsto\begin{cases}
            +\infty,&\text{ if }\alpha<0;\\
            d^2_{\rec C}\left(y\right)+s^2,&\text{ if }\alpha=0;\\
            \alpha^2 d_{C}^2(y/\alpha)+(\alpha-s)^2,&\text{ if }\alpha>0,
        \end{cases}
    \end{align}
    is strictly convex and supercoercive\footnote{
Recall that a function $f\colon X\to[-\infty,+\infty]$ is \emph{supercoercive} if 
    $\lim_{\norm{x}\to+\infty}\frac{f(x)}{\norm{x}}=+\infty$. 
}, with unique minimizer $\alpha^*\geq0$. Moreover, we have
    \begin{equation}
        (\forall\alpha>0)\quad\Psi_{(y,s)}'(\alpha)=-2\alpha\langle P_C(y/\alpha),(\Id-P_C)(y/\alpha)\rangle+2(\alpha-s).
    \end{equation}
    The projection onto $K$ is then given by
    \begin{align}
        P_K(y,s)=\begin{cases}
            \left(P_{\rec (C)}(y),0\right),&\text{ if }\alpha^*=0;\\
            \left(\alpha^*P_C(y/\alpha^*),\alpha^*\right),&\text{ if }\alpha^*>0.
        \end{cases}
    \end{align}
\end{fact}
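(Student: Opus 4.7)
The plan is to reduce the projection onto $K=\ccone(C\times\{1\})$ to a scalar minimization over the homogenization parameter $\alpha\geq0$ (the last coordinate of a point of $K$). First, I would establish a slice description of $K$: for $\alpha>0$, the slice $K\cap(X\times\{\alpha\})$ equals $\alpha C\times\{\alpha\}$, directly from $\cone(C\times\{1\})=\{(\alpha c,\alpha)\mid\alpha>0,\,c\in C\}$; and at height $\alpha=0$, the slice equals $\rec(C)\times\{0\}$. The inclusion ``$\supseteq$'' of the latter is easy: for $v\in\rec(C)$, $c_0\in C$, and $\alpha_n\searrow0^+$, one has $c_0+v/\alpha_n\in C$, so $\alpha_n(c_0+v/\alpha_n,1)=(\alpha_n c_0+v,\alpha_n)\in\cone(C\times\{1\})$ and tends to $(v,0)$. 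The opposite inclusion is the main technical step: if $\alpha_n c_n\to z$ with $c_n\in C$ and $\alpha_n\searrow0^+$, then for any $c_0\in C$ and any $t\geq0$, the convex combinations $(1-t\alpha_n)c_0+t\alpha_n c_n\in C$ (for $n$ large) converge to $c_0+tz$, so $z\in\rec(C)$.

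With the slice structure in hand, minimizing $\|y-z\|^2+(s-\alpha)^2$ over $z$ in each slice reduces the projection problem to $d_K^2(y,s)=\inf_{\alpha\geq0}\Psi_{(y,s)}(\alpha)$: for $\alpha>0$ the in-slice minimum is $\alpha^2 d_C^2(y/\alpha)+(\alpha-s)^2$ (via the scaling identity $P_{\alpha C}(y)=\alpha P_C(y/\alpha)$), and for $\alpha=0$ it equals $d_{\rec C}^2(y)+s^2$. To justify the claimed properties of $\Psi_{(y,s)}$, I would invoke that $\alpha\mapsto\alpha\,d_C(y/\alpha)$ is (the fixed-$y$ restriction of) the perspective of the convex function $d_C$, hence convex on $(0,\infty)$ and nonnegative, so its square $\alpha^2 d_C^2(y/\alpha)$ is convex on $(0,\infty)$. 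The slice-description argument also gives $\alpha^2 d_C^2(y/\alpha)\to d_{\rec C}^2(y)$ as $\alpha\searrow0^+$, so the stated value at $\alpha=0$ is precisely the lower-semicontinuous convex extension. Adding the strictly convex $(\alpha-s)^2$ yields strict convexity of $\Psi_{(y,s)}$ on $[0,\infty)$; supercoercivity is immediate from $\Psi_{(y,s)}(\alpha)\geq(\alpha-s)^2$, producing a unique minimizer $\alpha^*\geq0$.

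For the derivative on $(0,\infty)$, I would apply the chain and product rules to $g(\alpha):=\alpha^2 d_C^2(y/\alpha)$, using the standard identity $\nabla d_C^2=2(\Id-P_C)$. Setting $u:=(\Id-P_C)(y/\alpha)$ and $v:=P_C(y/\alpha)$, the expansion $y/\alpha=u+v$ causes cancellation, giving $g'(\alpha)=2\alpha\|u\|^2-2\langle u,y\rangle=-2\alpha\langle u,v\rangle=-2\alpha\langle P_C(y/\alpha),(\Id-P_C)(y/\alpha)\rangle$; adding $2(\alpha-s)$ from $(\alpha-s)^2$ recovers the stated formula. Finally, the projection is read off from $\alpha^*$: it is the in-slice minimizer, equal to $(P_{\rec(C)}(y),0)$ when $\alpha^*=0$ and to $(\alpha^* P_C(y/\alpha^*),\alpha^*)$ when $\alpha^*>0$.

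The main obstacle is the $\alpha=0$ boundary behavior, namely the simultaneous identification of the slice $K\cap(X\times\{0\})$ and of the limit $\lim_{\alpha\searrow0^+}\alpha^2 d_C^2(y/\alpha)$ with the recession-cone distance. Both reduce to the convex-combination argument above; once this is secured, the perspective-function convexity, envelope-type derivative computation, and recovery of $P_K(y,s)$ from the minimizer $\alpha^*$ are routine.
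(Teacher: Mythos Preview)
The paper does not prove this statement; it is quoted as a fact from \cite[Theorems~4.1 and~5.1]{BTH2023}, so there is no in-paper argument to compare against. Your outline is a correct self-contained proof: the slice identification $K\cap(X\times\{\alpha\})=\alpha C\times\{\alpha\}$ for $\alpha>0$ and $=\rec(C)\times\{0\}$ for $\alpha=0$ is right (your convex-combination argument for the $\alpha=0$ inclusion is exactly what is needed, and together with a routine weak-compactness/lower-semicontinuity-of-the-norm step it also yields $d_{\alpha C}(y)\to d_{\rec C}(y)$); the perspective of $d_C$ gives convexity of $\alpha\mapsto\alpha\,d_C(y/\alpha)$ on $(0,\infty)$ and hence of its square, and the limit at $0$ shows the stated value is the lsc extension; adding the strictly convex term $(\alpha-s)^2$ then gives strict convexity and supercoercivity on $[0,\infty)$; and your derivative computation via $\nabla d_C^2=2(\Id-P_C)$ together with the decomposition $y/\alpha=u+v$ checks out line by line. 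In short, your sketch would constitute a valid proof of the cited result.
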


For the analysis later on, we will need the following fact (see \cite[Proposition~9.29 and Example~9.32]{BC2017}):
\begin{fact}\label{f:supercoercive}
    Let $f\colon X\to\left]-\infty,+\infty\right]$ be convex, lower semicontinuous, and proper. If $f$ is supercoercive, then 
    $\rec\epi f=\epi\iota_{\{0\}} = \{0\}\times\RP\subseteq X\times\RR$, 
    where $\iota$ is the indicator function.
\end{fact}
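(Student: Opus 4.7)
The plan is to route through the recession function. I would show that under supercoercivity, the recession function of $f$, namely
\begin{equation*}
f^\infty\colon X\to\RX\colon d\mapsto \lim_{t\to+\infty} \frac{f(x_0+td)-f(x_0)}{t}
\end{equation*}
(for any fixed $x_0\in\dom f$; the limit exists in $\RX$ and is independent of $x_0$ by the convexity and lower semicontinuity of $f$, since the difference quotient is nondecreasing in $t$), coincides with $\iota_{\{0\}}$. Combined with the standard identity $\rec\epi f = \epi f^\infty$ for proper lsc convex $f$ (see, e.g., \cite[Proposition~9.29]{BC2017}), this immediately yields $\rec\epi f = \epi\iota_{\{0\}} = \{0\}\times\RP$, as claimed.

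The key computation is short. Clearly $f^\infty(0)=0$. For $d\neq 0$, I would factor the difference quotient as
\begin{equation*}
\frac{f(x_0+td)-f(x_0)}{t} = \frac{f(x_0+td)}{\norm{x_0+td}}\cdot \frac{\norm{x_0+td}}{t} - \frac{f(x_0)}{t}.
\end{equation*}
As $t\to+\infty$, one has $\norm{x_0+td}\to+\infty$, $\norm{x_0+td}/t\to\norm{d}>0$, and $f(x_0)/t\to 0$; supercoercivity then forces $f(x_0+td)/\norm{x_0+td}\to +\infty$, so the whole expression tends to $+\infty$. Hence $f^\infty(d)=+\infty$ for every $d\neq 0$, and therefore $f^\infty=\iota_{\{0\}}$.

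There is essentially no obstacle here: supercoercivity along any nontrivial ray forces the difference quotient to diverge, and the passage from recession functions to recession cones of epigraphs is a standard folklore identity for proper lsc convex functions. Thus the entire argument reduces to the one-line limit computation above, and no additional machinery is required.
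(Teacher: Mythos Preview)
Your proposal is correct and follows precisely the route the paper points to: the paper does not give an independent argument but simply cites \cite[Proposition~9.29 and Example~9.32]{BC2017}, and your proof is an unpacking of those two references---Proposition~9.29 supplies the identity $\rec\epi f=\epi f^\infty$, and your limit computation along rays is exactly the content of Example~9.32. So there is no methodological difference, only that you have written out what the citation encodes.
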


Let $b_0\in\left]0,1\right[$ and 
$a_0>0$ such that $x^{(0)}=a_0(b_0,0,1)\in\left]0,\delta\right[\times\{0\}\times\mathbb{R}_{++}\subseteq H$. Generate the sequence of alternating projections via
\begin{equation}
    x^{(2n+1)} := P_{K}x^{(2n)}\text{ and }x^{(2n+2)} := P_{H}x^{(2n+1)}.\label{251215b}
\end{equation}
{ An instance of the trajectory is illustrated in \cref{fig:image2}.}

\begin{figure}[H]
\label{fig:3D}
\begin{subfigure}{0.47\linewidth}
\includegraphics[width=\linewidth]{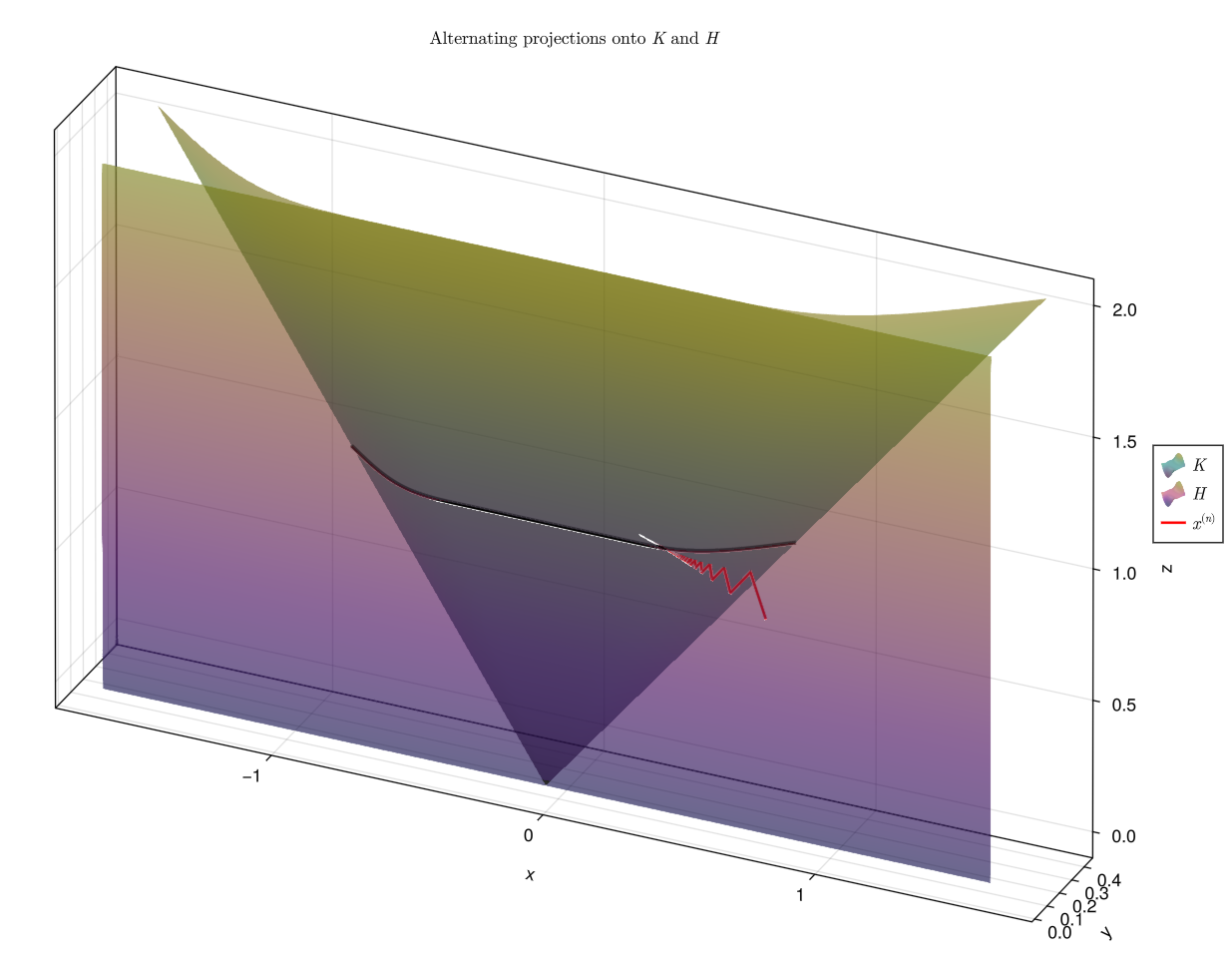} 
\label{fig:subim1}
\end{subfigure}
\begin{subfigure}{0.47\linewidth}
\includegraphics[width=\linewidth]{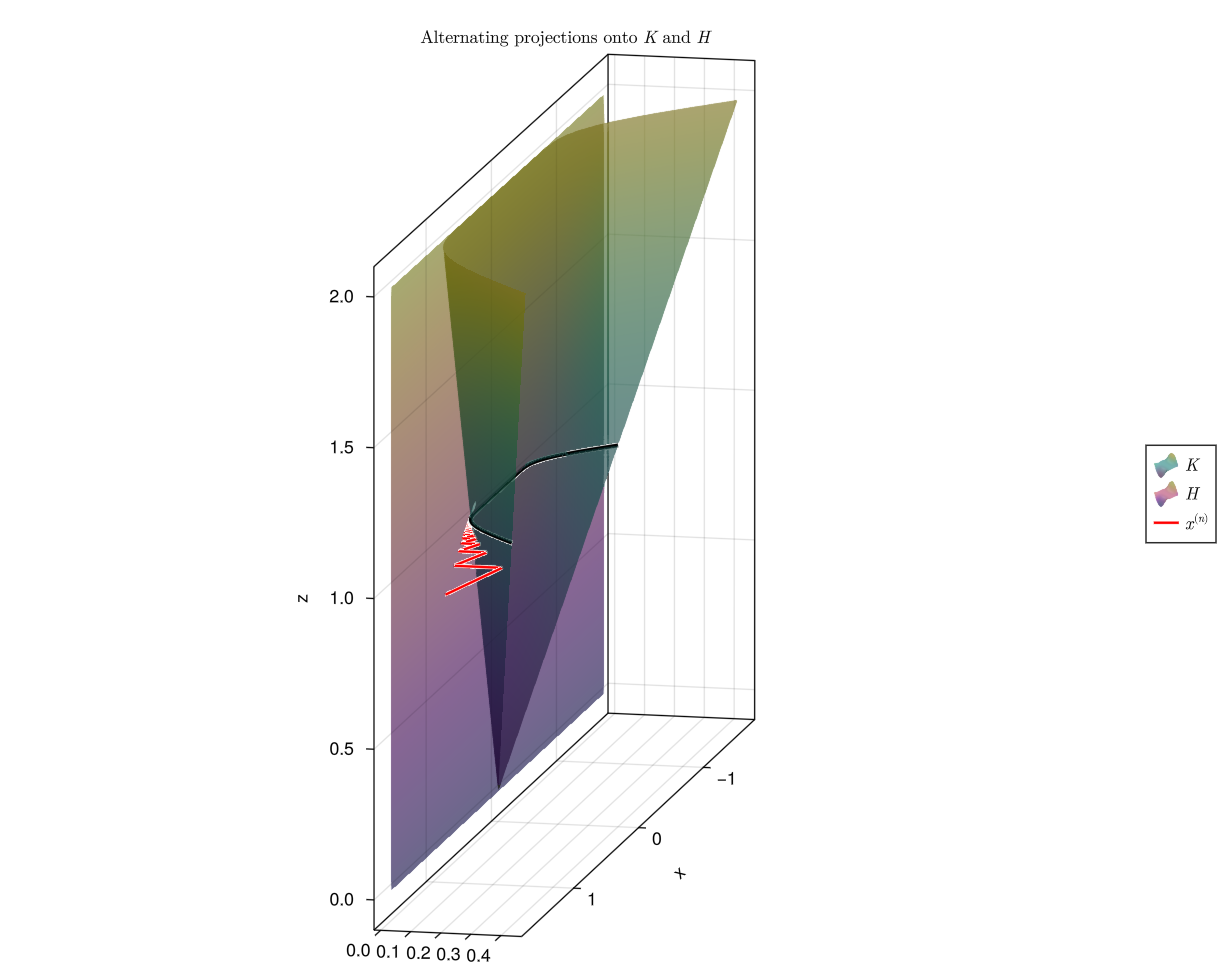}
\label{fig:subim2}
\end{subfigure}

\caption{The trajectory of \cref{251215b} for $f(x)=\exp(- x^{-2})$ with domain $\bigl[-\sqrt{2/3},\sqrt{2/3}\bigr]$} 
\label{fig:image2}
\end{figure}

\begin{proposition}\label{p:R3sequence1}
    Suppose that $f$ is supercoercive. Then
    \begin{enumerate}[ref=\currentenv~\theproposition(\roman*)]
        {
        \item\label{p:R3sequence1i-}
        $H \cap K = \{0\}\times\{0\}\times\RP$.        }
        \item\label{p:R3sequence1i} $x^{(n)} \to \bar{x} := \bar{a}(0,0,1) \in 
        (H \cap K)\smallsetminus\{0\}$.
        \item\label{p:R3sequence1ii} For all $n\in\mathbb{N}$: $0\notin\argmin\Psi_{x^{(2n)}}$.
        \item\label{p:R3sequence1iii} For all $n\in\mathbb{N}\smallsetminus\{0\}$, the iterates have the following form
        \begin{equation}
            x^{(2n-1)}=a_n\left(b_n,f(b_n),1\right)\quad\text{and}\quad x^{(2n)}=a_n(b_n,0,1),
        \end{equation}
        where
        \begin{equation}
            \{a_{n}\}=\argmin\Psi_{x^{(2n-2)}}\quad\text{and}\quad \left(b_{n},f(b_n)\right)=P_{\epi f}\Big(\frac{a_{n-1}}{a_{n}}(b_{n-1},0)\Big),
        \end{equation}
        and $a_n,b_n>0$. Moreover, 
        \begin{equation}
            (\forall n\in\mathbb{N}\smallsetminus\{0\})\quad \frac{a_{n-1}}{a_{n}}b_{n-1}=b_{n}+f(b_n)f'(b_n).
        \end{equation}
        \item\label{p:R3sequence1iv} For all $n\in\mathbb{N}\smallsetminus\{0\}$:
        \begin{equation}
            \frac{a_{n-1}}{a_n}=1+f(b_n)\big(f(b_n)-f'(b_n)b_n\big).
        \end{equation}
        Moreover,
        \begin{equation}
            (\forall n\in\mathbb{N})\quad 0<a_{n}\leq a_{n+1}\quad\text{and}\quad 0<b_{n+1}<\frac{a_n}{a_{n+1}}b_n\leq b_n,
        \end{equation}
        and 
        \begin{equation}
        a_n\to\bar{a}>0,\;\; b_n\to 0.
        \end{equation}
    \end{enumerate}
\end{proposition}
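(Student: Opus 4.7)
The plan is to dispatch the five items in the logical order (i), then (iii)--(iv) jointly by induction, then the ratio formula and monotonicity part of (v), then (ii), and finally the limit statements in (v). This order is forced by a genuine interdependence: (iv) rests on (iii) (we must rule out $\alpha^*=0$ so that the ``cone apex'' branch of \cref{f:hcone} never fires); (v)'s ratio formula falls out of the stationarity equation set up in (iv); (ii) uses the $a_n$-monotonicity from (v) together with the classical convergence of alternating projections in $\RR^3$; and the limits of $a_n,b_n$ in (v) are finally read off from (ii).

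For (i), I would unfold $K=\ccone(\epi f\times\{1\})$ using $\rec\epi f=\{0\}\times\RP$ from \cref{f:supercoercive}: every point of $K$ is either $(y,r,t)$ with $t>0$ and $r\geq tf(y/t)$, or $(0,r,0)$ with $r\geq 0$. Intersecting with $H$ and invoking $f>0$ off $\{0\}$ forces the first coordinate to vanish, leaving exactly the claimed ray. For the induction in (iii)--(iv), assume $x^{(2n)}=a_n(b_n,0,1)$ with $a_n,b_n>0$ (given at $n=0$). To obtain (iii), compute $\Psi_{x^{(2n)}}(0)=(a_nb_n)^2+a_n^2$ via $\rec\epi f$, and compare with $\Psi_{x^{(2n)}}(a_n)=a_n^2 d_{\epi f}^2(b_n,0)\leq a_n^2 b_n^2$ using $(0,0)\in\epi f$; the strict inequality $\Psi(a_n)<\Psi(0)$ forces the unique minimizer $a_{n+1}:=\alpha^*$ to be positive. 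Then \cref{f:hcone} gives $x^{(2n+1)}=\bigl(a_{n+1}P_{\epi f}(a_nb_n/a_{n+1},0),\,a_{n+1}\bigr)$, and \cref{f:epi}---applicable because $(a_nb_n/a_{n+1},0)$ lies in $\dom f\times\RR$ and strictly outside $\epi f$---produces $(b_{n+1},f(b_{n+1}))$ with the stated recursion $a_nb_n/a_{n+1}=b_{n+1}+f(b_{n+1})f'(b_{n+1})$ and $b_{n+1}\in(0,a_nb_n/a_{n+1})$. Applying $P_H$ strips the middle coordinate and restores the inductive form for $x^{(2n+2)}$.

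For the ratio formula in (v), substitute $P_C(a_nb_n/a_{n+1},0)=(b_{n+1},f(b_{n+1}))$ and $(\Id-P_C)(a_nb_n/a_{n+1},0)=(f(b_{n+1})f'(b_{n+1}),\,-f(b_{n+1}))$ into the $\Psi'$-formula from \cref{f:hcone}, set $\Psi'(a_{n+1})=0$, and rearrange to obtain $a_n/a_{n+1}=1+f(b_{n+1})\bigl(f(b_{n+1})-b_{n+1}f'(b_{n+1})\bigr)$. The convex gradient inequality $f(0)\geq f(b_n)-f'(b_n)b_n$ together with $f(0)=0$ and $f\geq 0$ makes the right-hand side $\leq 1$, hence $a_{n-1}\leq a_n$; the $b$-inequality is then immediate from the recursion since $f,f'>0$ on $(0,\delta]$. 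For (ii), the classical convergence theorem for alternating projections between two closed convex sets in $\RR^3$ with nonempty intersection gives $x^{(n)}\to\bar x\in H\cap K$, which by (i) must be of the form $(0,0,\bar a)$ for some $\bar a\geq 0$; since $a_n$ is the third coordinate of $x^{(2n)}$ and is non-decreasing from $a_0>0$, we get $\bar a\geq a_0>0$. Finally, $x^{(2n)}=(a_nb_n,0,a_n)\to(0,0,\bar a)$ together with $a_n\to\bar a>0$ forces $b_n\to 0$.

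The main obstacle is the joint induction for (iii)--(iv), specifically verifying $\alpha^*>0$ at every even step; this is delicate because projections onto general convex cones can collapse to the apex. The structural facts that save us are $\rec\epi f=\{0\}\times\RP$ (supercoercivity plus \cref{f:supercoercive}) together with $(0,0)\in\epi f$; these conspire to give the key strict inequality $\Psi_{x^{(2n)}}(a_n)<\Psi_{x^{(2n)}}(0)$. A secondary care point is the hypothesis for \cref{f:epi}, which at each step reduces to checking $a_nb_n/a_{n+1}\in(0,\delta]$; this is preserved by induction because $b_{n+1}\leq(a_n/a_{n+1})b_n\leq b_n\leq b_0\in(0,\delta)$.
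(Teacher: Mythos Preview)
Your argument is sound and follows a genuinely different route from the paper's. The paper proceeds in the listed order (i)$\to$(ii)$\to$(iii)$\to$(iv)$\to$(v): it first secures $\bar a>0$ via Fej\'er monotonicity of $(x^{(n)})$ with respect to the single point $a_0(0,0,1)\in H\cap K$ (using the standing hypothesis $b_0<1$), and then derives (iii) by contradiction---if $\alpha^*=0$ at some even step the sequence collapses to the origin, contradicting (ii). Your direct comparison $\Psi_{x^{(2n)}}(a_n)\leq a_n^2b_n^2<a_n^2b_n^2+a_n^2=\Psi_{x^{(2n)}}(0)$ is more elementary and frees (iii) from any dependence on (ii); conversely, you recover (ii) afterwards from the monotonicity $a_n\uparrow\bar a\geq a_0>0$, which is cleaner than the Fej\'er estimate and makes no use of $b_0<1$.

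One caveat: your ``secondary care point'' invokes $a_n/a_{n+1}\leq 1$ inside the (iii)--(iv) induction, yet in your stated plan the ratio formula and monotonicity come only \emph{after} that induction, so the domain check for \cref{f:epi} is circular as written. The paper glosses over exactly the same hypothesis. A clean fix available to either approach is to note, before invoking \cref{f:epi}, that $\Psi'_{x^{(2n)}}(a_n)=-2a_n\langle P_{\epi f}(b_n,0),(b_n,0)-P_{\epi f}(b_n,0)\rangle\leq 0$ by the variational characterization of the projection with $0\in\epi f$; strict convexity of $\Psi$ then already gives $a_{n+1}\geq a_n$, whence $a_nb_n/a_{n+1}\leq b_n<\delta$.
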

\begin{proof}
    {
    {\it (i)}: 
    \cref{f:supercoercive} in tandem with 
    \cite[Corollary~6.53]{BC2017} yield 
    \begin{equation}
    \label{e:251231b}
    K = \ccone(\epi f \times \{0\}) = \menge{\alpha(x,y,1)}{\alpha>0, f(x)\leq y} \cup 
    \big(\{0\}\times\RP\times \{0\}\big).
    \end{equation}
    Let $(u,v,w)\in H\cap K$. Because $(u,v,w)\in H$, we have $v=0$. 
    Suppose first that there exists $\alpha>0$ such that 
    $u=\alpha x, v=\alpha y, w=\alpha$ and $f(x)\leq y$. 
    Now $v=0$, so $y=0$ which yields $f(x)=0$, i.e., $x=0$. 
    Thus $(u,v,w) \in \{0\}\times\{0\}\times\RPP$. 
    Now suppose that $(u,v,w)\in\{0\}\times\RP\times\{0\}$. 
    Then $(u,v,w)=(0,0,0)$. 
    In either case, $(u,v,w)\in\{0\}\times\{0\}\times\RP$ and so
    \begin{equation}
    H\cap K \subseteq \{0\}\times\{0\}\times\RP. 
    \end{equation}
    Conversely, let $(0,0,\beta)\in \{0\}\times\{0\}\times\RP$. 
    Clearly, $(0,0,\beta)\in H$. 
    If $\beta=0$, then $(0,0,\beta)=(0,0,0) \in \{0\}\times \RP \times\{0\} 
    \subseteq K$ by \cref{e:251231b}. 
    And if $\beta>0$, then $(0,0,\beta) = \beta(0,f(0),1) \in K$, again 
    by \cref{e:251231b}. 
    Altogether, $(0,0,\beta)\in H\cap K$ and we've verifed \cref{p:R3sequence1i-}. 
    }

    {\it (ii)}: \cref{p:R3sequence1i-} and
    \cite[Theorem~1]{Bregman1965} imply 
    $x^{(n)} \to \bar{x} = \bar{a}(0,0,1) \in H \cap K=
    \{0\}\times\{0\}\times\RP$. 
    Since $a_0(0,0,1)\in H\cap K$ and $(x^{(n)})_{n\in\NN}$ is Fej\'er monotone with respect to $H\cap K$ we obtain
    \begin{equation}
        (\forall n\in\NN)\quad d_{\{a_0(0,0,1)\}}\big(x^{(n)}\big)\leq d_{\{a_0(0,0,1)\}}\big(x^{(0)}\big)=a_0b_0.
    \end{equation}
    By the continuity of the distance function, we get that
    \begin{equation}
        |\bar{a}-a_0|=d_{\{a_0(0,0,1)\}}\left(\bar{x}\right)\leq a_0b_0.
    \end{equation}
    Since $a_0>0$ and $b_0<1$ by assumption, this implies $\bar{a}>0$.

    {\it (iii)}: Since $f$ is supercoercive, \cref{f:supercoercive} yields 
    $\rec\epi f=\{0\}\times\RP$. 
    By combining this with \cref{251215b} and \cref{f:hcone}, we get that
    \begin{subequations}
        \begin{align}
            (\forall n\in\mathbb{N})\quad x^{(2n+1)}=P_Kx^{(2n)}&=\begin{cases}
            \left(P_{\rec (\epi f)}\left(x^{(2n)}_1,x^{(2n)}_2\right),0\right),&\text{ if }\alpha^*=0;\\
            \left(\alpha^*P_{\epi f}\left(\frac{1}{\alpha^*}\left(x^{(2n)}_1,x^{(2n)}_2\right)\right),\alpha^*\right),&\text{ if }\alpha^*>0,
            \end{cases}\\
            &=\begin{cases}
            \left(0,\max\left\{x^{(2n)}_2,0\right\},0\right),&\text{ if }\alpha^*=0;\\
            \left(\alpha^*P_{\epi f}\left(\frac{1}{\alpha^*}\left(x^{(2n)}_1,x^{(2n)}_2\right)\right),\alpha^*\right),&\text{ if }\alpha^*>0,
            \end{cases}
        \end{align}
    \end{subequations}
    where $\{\alpha^*\}=\amin \Psi_{x^{(2n)}}$. By \cref{251215a}, we have $P_H\big(0,
    \max\{x^{(2n)}_2,0\},0\big)=(0,0,0)$. It then follows from \cref{p:R3sequence1i} that $0\notin \amin \Psi_{x^{(2n)}}$ for all $n\in\mathbb{N}$.

    {\it(iv)}: We will prove this by induction. For the base case $n=1$, by \eqref{251215b}, \cref{p:R3sequence1ii}, and \cref{f:hcone}, we have
    \begin{equation}
        x^{(1)}=P_{K}x^{(0)}=a_1\bigg(P_{\epi f}\Big(\frac{a_0}{a_{1}}(b_0,0)\Big),1\bigg)=a_1\big(b_1,f(b_1),1\big),
    \end{equation}
    where
    \begin{equation}
        \RR_{++}\supset\{a_{1}\}=\argmin\Psi_{x^{(0)}}\quad\text{and}\quad \big(b_{1},f(b_1)\big)=P_{\epi f}\Big(\frac{a_0}{a_{1}}(b_0,0)\Big).
    \end{equation}
    Since $a_0,b_0\in\mathbb{R}_{++}$ by assumption, \cref{f:epi} yields $b_1\in\bigl]0,\frac{a_0}{a_1}b_0\bigr[\subset \mathbb{R}_{++}$. Using \cref{251215a} and \cref{251215b}, we obtain $x^{(2)}=P_Hx^{(1)}=a_1(b_1,0,1)$.

    Now assume that for some $k\in\mathbb{N}\smallsetminus\{0\}$, we have
    \begin{equation}
        x^{(2k-1)}=a_k\big(b_k,f(b_k),1\big)\quad\text{and}\quad x^{(2k)}=a_k(b_k,0,1),
    \end{equation}
    where $a_k,b_k>0$. By \cref{251215b}, \cref{p:R3sequence1ii} 
    and \cref{f:hcone}, we have
    \begin{equation}
        x^{(2k+1)}=P_{K}x^{(2k)}=a_{k+1}\left(P_{\epi f}\Big(\frac{a_k}{a_{k+1}}(b_k,0)\Big),1\right)
        =a_{k+1}\big(b_{k+1},f(b_{k+1}),1\big),
    \end{equation}
    where
    \begin{equation}
        \RR_{++}\supset\{a_{k+1}\}=\argmin\Psi_{x^{(2k)}}\quad\text{and}\quad \big(b_{k+1},f(b_{k+1})\big)=P_{\epi f}\Big(\frac{a_k}{a_{k+1}}(b_k,0)\Big).
    \end{equation}
    Since $a_k,b_k>0$ by assumption, \cref{f:epi} yields $b_{k+1}\in\bigl]0,\frac{a_k}{a_{k+1}}b_k\bigr[\subset \mathbb{R}_{++}$. Using \cref{251215a} and \cref{251215b}, we obtain $x^{(2k+2)}=P_Hx^{(2k+1)}=a_{k+1}(b_{k+1},0,1)$. The ``Moreover'' part follows from \cref{f:epi}.

    {\it(v)}: By \cref{p:R3sequence1iii} and \cref{f:hcone}, we get that $0<a_n\in\argmin\Psi_{(a_{n-1}b_{n-1},0,a_{n-1})}$ for all $n\in\mathbb{N}\setminus\{0\}$ and $\Psi_x$ is convex for all $x\in \mathbb{R^3}$. This implies that for all $n\in\NN\smallsetminus\{0\}$
\begin{subequations}
\begin{align}
0
=
\qquad\ \:&\Psi'_{(a_{n-1}b_{n-1},0,a_{n-1})}(a_n)
\\   
\eqoverset{\text{\cref{f:hcone}}}{=}
\quad\ \ \ &-2a_n\left\langle P_{\epi f}
\Big(\frac{a_{n-1}b_{n-1}}{a_n},0\Big),(\Id-P_{\epi f})\Big(\frac{a_{n-1}b_{n-1}}{a_n},0\Big)\right\rangle +2(a_n-a_{n-1})\\
\eqoverset{\text{\cref{p:R3sequence1iii}}}{=}
\,&-2a_n\left\langle \big(b_n,f(b_n)\big),\Big(\frac{a_{n-1}b_{n-1}}{a_n},0\Big)-\big(b_n,f(b_n)\big)\right\rangle
+2(a_n-a_{n-1})\\
\eqoverset{\text{\cref{p:R3sequence1iii}}}{=}
\,&-2a_n\left\langle \big(b_n,f(b_n)\big),\left(f(b_n)f'(b_n),-f(b_n)\right)\right\rangle+2(a_n-a_{n-1})\\
=
\qquad\ \:&-2a_n\big(b_nf(b_n)f'(b_n)-f^2(b_n)\big)+2(a_n-a_{n-1})
\\
=
\qquad\ \:&-2a_nf(b_n)\big(b_nf'(b_n)-f(b_n)\big)+2(a_n-a_{n-1}).
\end{align}
\end{subequations}
    Switch sides, simplify, and apply the convexity of $f$, we obtain
    \begin{equation}
        0<\frac{a_{n-1}}{a_n}=1+f(b_n)\left(f(b_n)+f'(b_n)(0-b_n)\right)\leq1+f(b_n)f(0)=1.\label{251215c1}
    \end{equation}
    From \cref{p:R3sequence1iii}, we get that $\left(b_{n},f(b_n)\right)=P_{\epi f}\big(\frac{a_{n-1}}{a_{n}}(b_{n-1},0)\big)$, and $b_n>0$ for all $n\in\mathbb{N}\smallsetminus\{0\}$. \cref{f:epi} and \cref{251215c1} then yields $b_{n}\in\bigl]0,\frac{a_{n-1}}{a_{n}}b_{n-1}\bigr[\subseteq\left]0,b_{n-1}\right[$ for all $n\in\mathbb{N}\smallsetminus\{0\}$. 
    The convergence $a_n\to\bar{a}$ and $b_n\to0$ follows from \cref{p:R3sequence1i} and \cref{p:R3sequence1iii}.
\end{proof}

Next, we analyze the asymptotic behavior of $(b_n)_{n\in\mathbb{N}}$.
\begin{proposition}\label{p:R3sequence2}
    Suppose that $f$ is supercoercive and there exists $q\in\mathbb{R}$ and $p,\alpha\in\mathbb{R}_{++}$ such that
    \begin{equation}
        \lim_{x\searrow0}\frac{f(x)f'(x)}{x^q\exp(-\alpha x^{-p})}=c_{q,\alpha,p}>0.\label{251215c}
    \end{equation}
    Then $\displaystyle \frac{b_n}{\left(\frac{\alpha}{\ln(n)}\right)^{1/p}}\to1$ as $n\to\infty$.
\end{proposition}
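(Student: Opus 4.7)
My plan is to carry over the argument of \cref{p:R2sequence2} by first showing that $(b_n)_\nnn$ satisfies an asymptotic recurrence of essentially the same shape as $(u_n)_\nnn$ in the planar case. The starting point is the recurrence from \cref{p:R3sequence1iii},
$$\frac{a_{n-1}}{a_n}\,b_{n-1}=b_n+f(b_n)f'(b_n),$$
combined with the identity $\tfrac{a_{n-1}}{a_n}=1+f(b_n)\big(f(b_n)-f'(b_n)b_n\big)$ from \cref{p:R3sequence1iv}. Substituting and simplifying yields
$$b_{n-1}-b_n=\frac{f(b_n)f'(b_n)(1+b_n^2)-f(b_n)^2\,b_n}{1+f(b_n)\big(f(b_n)-f'(b_n)b_n\big)}.$$
The factor $(1+b_n^2)$ in place of $1$ and the subtracted term $f(b_n)^2 b_n$ are new compared with the planar recurrence $u_{n-1}-u_n=f(u_n)f'(u_n)$; absorbing them into a negligible remainder is the main technical obstacle.

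The key auxiliary bound is a convexity estimate: since $f$ is convex with $f(0)=0$, the supporting inequality of $f$ at $x$ evaluated at $0$ yields $0=f(0)\geq f(x)+f'(x)(0-x)$, i.e.\ $f(x)\leq f'(x)x$ on $[0,\delta]$, so $f(b_n)/f'(b_n)\leq b_n\to 0$. Combined with the hypothesis \cref{251215c}, this gives
$$\frac{f(b_n)^2\,b_n}{b_n^q\exp(-\alpha b_n^{-p})}\;\leq\; b_n^2\cdot\frac{f(b_n)f'(b_n)}{b_n^q\exp(-\alpha b_n^{-p})}\;\longrightarrow\;0,$$
while the denominator in the previous display tends to $1$ because $f(b_n)(f(b_n)-f'(b_n)b_n)\to 0$. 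Consequently,
$$\lim_{n\to\infty}\frac{b_{n-1}-b_n}{b_n^q\exp(-\alpha b_n^{-p})}=c_{q,\alpha,p},$$
which is the direct analogue of \cref{251213g} in the planar proof.

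From this point the argument of \cref{p:R2sequence2} transfers line by line with $u_n$ replaced by $b_n$: one first obtains $b_{n-1}/b_n\to 1$ because the exponential $\exp(-\alpha b_n^{-p})$ dominates the polynomial factor, then the monotonicity sandwich yields the integral asymptotic $\int_{b_n}^{b_{n-1}}\tfrac{dx}{x^q\exp(-\alpha x^{-p})}\to c_{q,\alpha,p}$, a Cesaro average produces
$$c_{q,\alpha,p}=\lim_{n\to\infty}\frac{1}{n}\int_{b_n}^{b_0}\frac{dx}{x^q\exp(-\alpha x^{-p})},$$
and the change of variable $t:=x^{-p}$ followed by a double application of L'H\^opital's rule (as in \cref{251214c}) delivers $\alpha b_n^{-p}/\ln(n)\to 1$, which is the desired conclusion. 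Once the recurrence above has been derived and the $a_n$-corrections have been absorbed, no further ingredient beyond the planar argument is needed.
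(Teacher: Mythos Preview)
Your argument is correct. The algebraic substitution of $\tfrac{a_{n-1}}{a_n}=1+f(b_n)\big(f(b_n)-f'(b_n)b_n\big)$ into the recurrence indeed yields
\[
b_{n-1}-b_n=\frac{f(b_n)f'(b_n)(1+b_n^2)-f(b_n)^2 b_n}{1+f(b_n)\big(f(b_n)-f'(b_n)b_n\big)},
\]
and the convexity bound $f(x)\le xf'(x)$ together with \cref{251215c} dispatches the correction terms exactly as you describe. Once $\lim_n (b_{n-1}-b_n)/\big(b_n^q\exp(-\alpha b_n^{-p})\big)=c_{q,\alpha,p}$ is in hand, the planar proof of \cref{p:R2sequence2} transfers verbatim (the strict decrease of $(b_n)$ from \cref{p:R3sequence1iv} supplies the monotonicity sandwich).

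The paper takes a slightly different route: it keeps the intermediate point $\tfrac{a_{n-1}}{a_n}b_{n-1}$ throughout, first establishing the integral asymptotic on $[b_n,\tfrac{a_{n-1}}{a_n}b_{n-1}]$ (where the recurrence matches the planar one exactly, since $\tfrac{a_{n-1}}{a_n}b_{n-1}-b_n=f(b_n)f'(b_n)$), and then separately showing that the residual integral over $[\tfrac{a_{n-1}}{a_n}b_{n-1},b_{n-1}]$ tends to $0$. Your approach absorbs the $a_n$-correction at the outset by algebra, which is more economical: it bypasses the splitting of the integral and the estimates in the paper's equations \cref{251215k}--\cref{251215n}. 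The paper's version has the minor advantage that the first half is a literal specialization of the planar computation (no new limits to check), but your route is shorter overall.
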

\begin{proof}
    By \cref{p:R3sequence1iii} and \cref{p:R3sequence1iv}, we obtain $0<a_n\leq a_{n+1}\to\bar{a}$, $b_n\geq\frac{a_n}{a_{n+1}}b_n>b_{n+1}\to0^+$, and
    \begin{equation}
        (\forall n\in\mathbb{N}\smallsetminus\{0\})\quad \frac{a_{n-1}}{a_n}b_{n-1}=b_n+f(b_n)f'(b_n).\label{251215e}
    \end{equation}
    This yields
    \begin{equation}
        (\forall n\in\mathbb{N}\smallsetminus\{0\})\quad \frac{f(b_n)f'(b_n)}{b_n^q\exp(-\alpha b_n^{-p})}=\frac{\frac{a_{n-1}}{a_n}b_{n-1}-b_n}{b_n^q\exp(-\alpha b_n^{-p})}.\label{251215f}
    \end{equation}
    By checking the derivative, 
    $\displaystyle \frac{1}{x^q\exp(-\alpha x^{-p})}$ is decreasing on $\mathbb{R}_{++}$ if $q\geq0$ and on $\Bigl]0,\big(\frac{-q}{\alpha p}\big)^{-1/p}\Bigr[$ if $q<0$. WLOG, assume that $b_0$ lies inside the decreasing interval. Since $\frac{a_{n-1}}{a_{n}}b_{n-1}>b_{n}$, we get
    \begin{subequations}\label{251215i}
        \begin{align}
            (\forall n\in\mathbb{N}\smallsetminus\{0\})\quad\frac{\frac{a_{n-1}}{a_{n}}b_{n-1}-b_n}{b_n^q\exp(-\alpha b_n^{-p})}&\geq\int_{b_n}^{\frac{a_{n-1}}{a_{n}}b_{n-1}}\frac{dx}{x^q\exp(-\alpha x^{-p})}\\
            &\geq\frac{\frac{a_{n-1}}{a_{n}}b_{n-1}-b_n}{\left(\frac{a_{n-1}}{a_{n}}b_{n-1}\right)^q\exp\Big(-\alpha \big(\frac{a_{n-1}}{a_{n}}b_{n-1}\big)^{-p}\Big)}.
        \end{align}
    \end{subequations}
    It follows from $b_n\searrow0^+$, \cref{251215c}, and \cref{251215f} that
    \begin{equation}
        \lim_{n\to\infty}\frac{\frac{a_{n-1}}{a_{n}}b_{n-1}-b_n}{b_n^q\exp(-\alpha b_n^{-p})}=c_{q,\alpha,p}>0.\label{251215g}
    \end{equation}
    Observe that \cref{251215c} and \cref{251215e} implies
    \begin{subequations}
    \label{251215h}
        \begin{align}
            \lim_{n\to\infty}\frac{\frac{a_{n-1}}{a_{n}}b_{n-1}}{b_n}&=1+\lim_{n\to\infty}\frac{f(b_n)f'(b_n)}{b_n}\\
            &=1+\lim_{n\to\infty}\frac{f(b_n)f'(b_n)}{b_n^q\exp(-\alpha b_n^{-p})}\frac{b_n^q\exp(-\alpha b_n^{-p})}{b_n}\\
       &=1+c_{q,\alpha,p}\lim_{n\to\infty}\frac{b_n^{q-1}}{\exp(\alpha b_n^{-p})}\\
            &=1.
        \end{align}
    \end{subequations}
    By combining \cref{251215g} and \cref{251215h}, we obtain
    \begin{subequations}
        \begin{align}
            &\lim_{n\to\infty}\frac{\frac{a_{n-1}}{a_{n}}b_{n-1}-b_n}{\left(\frac{a_{n-1}}{a_{n}}b_{n-1}\right)^q\exp\left(-\alpha \left(\frac{a_{n-1}}{a_{n}}b_{n-1}\right)^{-p}\right)}\\
            =\ &\lim_{n\to\infty}\frac{\frac{a_{n-1}}{a_{n}}b_{n-1}-b_n}{b_{n}^q\exp(-\alpha b_{n}^{-p})}\frac{b_{n}^q\exp(-\alpha b_{n}^{-p})}{\left(\frac{a_{n-1}}{a_{n}}b_{n-1}\right)^q\exp\left(-\alpha \left(\frac{a_{n-1}}{a_{n}}b_{n-1}\right)^{-p}\right)}\\
            =\ &c_{q,\alpha,p}\lim_{n\to\infty}\frac{b_n^q}{\left(\frac{a_{n-1}}{a_{n}}b_{n-1}\right)^q}\exp\left(-\alpha b_n^{-p}\left(1-\frac{\left(\frac{a_{n-1}}{a_{n}}b_{n-1}\right)^{-p}}{b_n^{-p}}\right)\right)\\
            =\ &c_{q,\alpha,p}\exp\left(-\alpha\lim_{n\to\infty} b_n^{-p}\left(1-\frac{b_{n}^{p}}{\left(\frac{a_{n-1}}{a_{n}}b_{n-1}\right)^{p}}\right)\right).
        \end{align}
    \end{subequations}
    By the Taylor expansion of $1-x^p$ at $x=1$, we get that $\frac{1-x^p}{p(1-x)}\to 1$ as $x\to1$. This together with \cref{251215h} yields
    \begin{equation}
        \frac{1-\frac{b_{n}^{p}}{\left(\frac{a_{n-1}}{a_{n}}b_{n-1}\right)^{p}}}{p\left(1-\frac{b_n}{\frac{a_{n-1}}{a_{n}}b_{n-1}}\right)}\to1.
    \end{equation}
    Hence,
    \begin{subequations}
            \label{251215j}
        \begin{align}
            &\lim_{n\to\infty}\frac{\frac{a_{n-1}}{a_{n}}b_{n-1}-b_n}{\left(\frac{a_{n-1}}{a_{n}}b_{n-1}\right)^q\exp\left(-\alpha \left(\frac{a_{n-1}}{a_{n}}b_{n-1}\right)^{-p}\right)}\\
            =\:\,&c_{q,\alpha,p}\exp\left(-\alpha\lim_{n\to\infty} b_n^{-p}\left(1-\frac{b_{n}^{p}}{\left(\frac{a_{n-1}}{a_{n}}b_{n-1}\right)^{p}}\right)\right)\\
            =\:\,&c_{q,\alpha,p}\exp\left(-\alpha p\lim_{n\to\infty} b_n^{-p}\left(1-\frac{b_{n}}{\frac{a_{n-1}}{a_{n}}b_{n-1}}\right)\right)\\
            \eqoverset{\text{\cref{251215h}}}{=}\ 
            &c_{q,\alpha,p}\exp\left(-\alpha p\lim_{n\to\infty} \frac{\frac{a_{n-1}}{a_{n}}b_{n-1}-b_{n}}{b_{n}^{p+1}}\right)\\
            \eqoverset{\text{\cref{251215g}}}{=}
            \ &c_{q,\alpha,p}\exp\left(-\alpha pc_{q,\alpha,p}\lim_{n\to\infty} \frac{b_n^q\exp(-\alpha b_n^{-p})}{b_{n}^{p+1}}\right)\\
            =\:\,&c_{q,\alpha,p}\exp\left(0\right)=c_{q,\alpha,p}.
        \end{align}
    \end{subequations}
    From \cref{251215i}, \cref{251215g}, and \cref{251215j}, we obtain
    \begin{equation}
        \lim_{n\to\infty}\int_{b_n}^{\frac{a_{n-1}}{a_{n}}b_{n-1}}\frac{dx}{x^q\exp(-\alpha x^{-p})}=c_{q,\alpha,p}.\label{251215o}
    \end{equation}
    Since $\displaystyle \frac{1}{x^q\exp(-\alpha x^{-p})}$ is decreasing on 
    $\big[\frac{a_{n-1}}{a_{n}}b_{n-1},b_{n-1}\big]$, we get that
    \begin{subequations}\label{251215k}
        \begin{align}
            0&\leq\int_{\frac{a_{n-1}}{a_{n}}b_{n-1}}^{b_{n-1}}\frac{dx}{x^q\exp(-\alpha x^{-p})}\\
            &\leq\left(b_{n-1}-\frac{a_{n-1}}{a_{n}}b_{n-1}\right)\left(\frac{a_{n-1}}{a_{n}}b_{n-1}\right)^{-q}\exp\left(\alpha\left(\frac{a_{n-1}}{a_{n}}b_{n-1}\right)^{-p}\right)\\
            &= b_{n-1}f(b_n)\left(f'(b_n)b_n-f(b_n)\right)\left(\frac{a_{n-1}}{a_{n}}b_{n-1}\right)^{-q}
            \exp\left(\alpha\left(\frac{a_{n-1}}{a_{n}}b_{n-1}\right)^{-p}\right), 
        \end{align}
    \end{subequations}
    where we used \cref{p:R3sequence1iv} 
    in the last equality. 
    By \cref{p:R3sequence1iv}, we have $\frac{a_{n-1}}{a_n}\to\frac{\bar{a}}{\bar{a}}=1$. This combined with \cref{251215h} yields
    \begin{equation}
        \frac{b_{n-1}}{b_n}\to1.\label{251215l}
    \end{equation}
    We also get from \cref{251215j} that
\begin{subequations}
\label{251215m}
    \begin{align}
\lim_{n\to\infty}\frac{\exp\left(\alpha\left(\frac{a_{n-1}}{a_{n}}b_{n-1}\right)^{-p}\right)}{\exp\left(\alpha b_n^{-p}\right)}
&=\exp\left(-\alpha\lim_{n\to\infty}b_n^{-p}\left(1-\frac{b_n^p}{\left(\frac{a_{n-1}}{a_{n}}b_{n-1}\right)^{p}}\right)\right)\\
&=\exp(0)=1.
    \end{align}
\end{subequations}
    By combining \cref{251215l}, \cref{251215h}, and \cref{251215m}, we obtain
    \begin{subequations}
        \begin{align}
            &\lim_{n\to\infty}b_{n-1}f(b_n)\left(f'(b_n)b_n-f(b_n)\right)\left(\frac{a_{n-1}}{a_{n}}b_{n-1}\right)^{-q}\exp\left(\alpha\left(\frac{a_{n-1}}{a_{n}}b_{n-1}\right)^{-p}\right)\\
            =\ &\lim_{n\to\infty}b_{n}f(b_n)\left(f'(b_n)b_n-f(b_n)\right)b_n^{-q}\exp\left(\alpha b_n^{-p}\right)\\
            =\ &\lim_{n\to\infty}b_n^2\left(\frac{f(b_n)f'(b_n)}{b_n^{q}\exp\left(-\alpha b_n^{-p}\right)}-\frac{f^2(b_n)}{b_n^{q+1}\exp\left(-\alpha b_n^{-p}\right)}\right).
        \end{align}
    \end{subequations}
    Since $b_n\searrow 0^+$, by assumption, we obtain $\frac{f(b_n)f'(b_n)}{b_n^{q}\exp\left(-\alpha b_n^{-p}\right)}\to c_{q,\alpha,p}>0$. Moreover, by L'Hôpital's rule, we also get
    \begin{subequations}
        \begin{align}
            \lim_{n\to\infty} \frac{f^2(b_n)}{b_n^{q+1}\exp\left(-\alpha b_n^{-p}\right)}&=\lim_{n\to\infty} \frac{2f(b_n)f'(b_n)}{(q+1)b_n^{q}\exp\left(-\alpha b_n^{-p}\right)+\alpha pb_n^{q-p}\exp\left(-\alpha b_n^{-p}\right)}\\
            &=\lim_{n\to\infty} \frac{2f(b_n)f'(b_n)}{b_n^{q}\exp\left(-\alpha b_n^{-p}\right)\left(q+1+\alpha pb_n^{-p}\right)}\\
            &=0.
        \end{align}
    \end{subequations}
    Altogether, we have
    \begin{equation}
        \lim_{n\to\infty}b_{n-1}f(b_n)\left(f'(b_n)b_n-f(b_n)\right)\left(\frac{a_{n-1}}{a_{n}}b_{n-1}\right)^{-q}\exp\left(\alpha\left(\frac{a_{n-1}}{a_{n}}b_{n-1}\right)^{-p}\right)=0.\label{251215n}
    \end{equation}
    It then follows from \cref{251215k} and \cref{251215n} that
    \begin{equation}
        \lim_{n\to\infty}\int_{\frac{a_{n-1}}{a_{n}}b_{n-1}}^{b_{n-1}}\frac{dx}{x^q\exp(-\alpha x^{-p})}=0.
    \end{equation}
    This combined with \cref{251215o} yields
    \begin{equation}
        \lim_{n\to\infty}\int_{b_n}^{b_{n-1}}\frac{dx}{x^q\exp(-\alpha x^{-p})}=c_{q,\alpha,p}.
    \end{equation}
    The remainder of the proof is identical to the corresponding part of the proof of \cref{p:R2sequence2}, starting at \cref{260101a}. 
\end{proof}

\begin{example}[G\"unt\"urk-Thao fails for a plane 
and a non-polyhedral cone in $\RR^3$]
\label{e:cone}
    Let $\beta\in \mathbb{R}_{++}$ and $r\in\{2,4,6,\ldots\}$. 
    Consider the function $f(x)=\exp(-\beta x^{-r})$ with domain $\bigl[-\left(\beta r/(r+1)\right)^{1/r},\left(\beta r/(r+1)\right)^{1/r}\bigr]$. Note that $f$ is supercoercive and hence, the assumption of \cref{p:R3sequence2} is satisfied with $q=-r-1$, $\alpha=2\beta$, $p=r$, and $c_{q,\alpha,p}=r\beta$. Hence, we obtain
    \begin{equation}
        1=\lim_{n\to\infty}\frac{b_n}{\left(\frac{\alpha}{\ln(n)}\right)^{1/p}}=\lim_{n\to\infty}\frac{b_n}{\left(\frac{2\beta}{\ln(n)}\right)^{1/r}},
    \end{equation}
    which implies
    \begin{equation}
        (\forall\varepsilon>0)(\exists m_\varepsilon\in\mathbb{N})(\forall n\geq m_\varepsilon)\quad b_n\geq\left(\left(1-\frac{\varepsilon}{2}\right)\left(\frac{2\beta}{\ln(n)}\right)\right)^{1/r}.\label{260106}
    \end{equation}
    This combined with \cref{p:R3sequence1} yields
    $ (\forall\varepsilon>0)(\exists m_\varepsilon\in\mathbb{N})(\forall\gamma\in\left]0,2-\varepsilon\right])$
    \begin{subequations}
        \begin{align}
           \quad\sum_{n=0}^{\infty}\norm{x^{(n+1)}-x^{(n)}}^\gamma&\geq\sum_{n=m_\varepsilon}^{\infty}\norm{x^{(2n)}-x^{(2n-1)}}^\gamma
           \notag\\
            &=\sum_{n=m_\varepsilon}^{\infty}
            a_n^\gamma\exp\left(-\gamma\beta b_n^{-r}\right)
            \tag{by \cref{p:R3sequence1iii} }\\
            &\geq\sum_{n=m_\varepsilon}^{\infty}
            a_0^\gamma\exp\left(-\gamma\frac{\ln(n)}{2-\varepsilon}\right)
            \tag{by \cref{p:R3sequence1iv} and \cref{260106} }\\
         &=a_0^\gamma\sum_{n=m_\varepsilon}^{\infty}n^{-\frac{\gamma}{2-\varepsilon}}=+\infty;\notag
        \end{align}
    \end{subequations}
consequently, 
    \begin{equation}\sum_{n=0}^{\infty}\norm{x^{(n+1)}-x^{(n)}}^\gamma=+\infty \quad\text{for all $\gamma\in\left]0,2\right[$.}
    \end{equation}
\end{example}

\begin{remark}
    In this specific instance of the alternating projections sequence, it follows from \cref{p:R2sequence2} and \cref{p:R3sequence2} that the iterates $(u_n)_{n\in\mathbb{N}}$ and their conified counterparts $(b_n)_{n\in\mathbb{N}}$ have the same asymptotic behavior. One can also obtain a conified version of \cite[Proposition 4.5]{HM2016} by the same procedure used to conify \cref{p:R2sequence2} into \cref{p:R3sequence2}. The argument is simpler and largely repetitive, so we leave the details to the interested reader.
\end{remark}

\section*{Acknowledgments}
The research of HHB was supported by a Discovery Grant from the Natural Sciences and Engineering Research Council of Canada.

\end{document}